\setlist[enumerate,1]{label={(\roman*)}}
\tikzset{auto}
\Crefname{subsection}{subsection}{subsections}
\Crefname{subsection}{Subsection}{Subsections}
\definecolor{mygreen}{rgb}{0,0.6,0}
\definecolor{mygray}{rgb}{0.5,0.5,0.5}
\definecolor{mymauve}{rgb}{0.58,0,0.82}
\tiny\color{mygray}, %
\newcolumntype{R}[2]{%
    >{\adjustbox{angle=#1,lap=\width-(#2)}\bgroup}%
    l%
    <{\egroup}%
}
\theoremstyle{plain}
\newtheorem{theorem}{Theorem}[section]
\newtheorem{lemma}[theorem]{Lemma}
\newtheorem{corollary}[theorem]{Corollary}
\newtheorem{proposition}[theorem]{Proposition}
\newtheorem{conjecture}[theorem]{Conjecture}
\newtheorem*{theorem*}{Theorem}
\newtheorem*{claim*}{Claim}
\newtheorem*{lemma*}{Lemma}
\theoremstyle{definition}
\newtheorem{definition}[theorem]{Definition}
\newtheorem{example}[theorem]{Example}
\newtheorem{examples}[theorem]{Examples}
\newtheorem{remark}[theorem]{Remark}
\newcommand\RP{\mathbb{RP}}
\newcommand{\R}{\mathbb{R}}
\newcommand{\K}{\mathcal{K}}
\newcommand{\Z}{\mathbb{Z}}
\newcommand{\A}{\mathcal{A}}
\newcommand{\C}{\mathcal{C}}
\newcommand{\Hi}{\mathcal{H}}
\newcommand{\B}{\mathcal{B}}
\renewcommand{\P}{\mathcal{P}}
\newcommand{\Pop}{\P^{\operatorname{op}}}
\renewcommand{\O}{\mathcal{O}}
\newcommand{\At}{\operatorname{Atoms}}
\newcommand{\coAt}{\operatorname{coAtoms}}
\newcommand{\op}{{\operatorname{op}}}
\renewcommand{\int}{\operatorname{int}}
\newcommand{\codestyle}[1]{{\tt #1}}
\newcommand{\polymake}{\texttt{polymake}}
\newcommand{\sage}{\texttt{SageMath}}
\newcommand{\sageversion}{\texttt{sage-8.9}}
\newcommand{\normaliz}{\texttt{normaliz}}
\begin{document}
\title[A new face iterator for finite locally branched lattices]{A new face iterator for polyhedra and for\\ more general finite locally branched lattices}

\author[J.~Kliem]{Jonathan Kliem}
\address[J.~Kliem]{Institut f\" ur Mathematik, Freie Universität Berlin, Germany}
\email{jonathan.kliem@fu-berlin.de}
\author[C.~Stump]{Christian Stump}
\address[C.~Stump]{Fakultät für Mathematik, Ruhr-Universität Bochum, Germany}
\email{christian.stump@rub.de}

\thanks{
  J.K.~receives funding by the Deutsche Forschungsgemeinschaft DFG under Germany´s Excellence Strategy – The Berlin Mathematics Research Center MATH+ (EXC-2046/1, project ID: 390685689).
  C.S.~is supported by the DFG Heisenberg grant STU 563/4-1 ``Noncrossing phenomena in Algebra and Geometry''.
}

\begin{abstract}
  We discuss a new memory-efficient depth-first algorithm and its implementation that iterates over all elements of a finite locally branched lattice.
  This algorithm can be applied to face lattices of polyhedra and to various generalizations such as finite polyhedral complexes and subdivisions of manifolds, extended tight spans and closed sets of matroids.
  Its practical implementation is very fast compared to state-of-the-art implementations of previously considered algorithms.
  Based on recent work of Bruns, García-Sánchez, O'Neill and Wilburne, we apply this algorithm to prove \emph{Wilf's conjecture} for all numerical semigroups of multiplicity~$19$ by iterating through the faces of the \emph{Kunz cone} and identifying the possible \emph{bad faces} and then checking that these do not yield counterexamples to Wilf's conjecture.
\end{abstract}

\maketitle

\newcommand{\defn}[1]{\emph{#1}} %

\section{Introduction}

We call a finite lattice $(\P, \leq)$ \defn{locally branched} if all intervals of length two contain at least four elements.
We show that such lattices are atomic and coatomic and refer to \Cref{sec:framework} for details.

\medskip

This paper describes a depth-first algorithm to iterate through the elements in a finite locally branched lattice given its coatoms, see \Cref{sec:algorithm}.
It moreover describes variants of this algorithm allowing the iteration over slightly more general posets.
Most importantly, examples of such locally branched lattices (or its mild generalizations) include face posets of
\begin{itemize}
\setlength{\itemsep}{3pt}
  \item polytopes and unbounded polyhedra,
  \item finite polytopal or polyhedral complexes,
  \item finite polyhedral subdivisions of manifolds,
  \item extended tight spans, and
  \item closed sets of matroids.
\end{itemize}
One may in addition compute all cover relations as discussed in \Cref{sec:covers}.
The provided theo\-re\-ti\-cal runtime (without variants) is the same as of the algorithm discussed by V.~Kaibel and M.~E.~Pfetsch in~\cite{Kaibel2002}, see \Cref{sec:data_structures}.
In the sligthly generalized situations, the theoretical runtime might be better as for extended tight spans (without chords) with many facets (using the opposite lattice), or might be worse as for extended tight spans with many vertices.

\medskip

In practice it appears that the chosen data structures and implementation details make the implementation\footnote{See~\url{https://trac.sagemath.org/ticket/26887}, merged into \sage\ version \sageversion.} very fast for the iteration and still fast for cover relations in the graded case compared to state-of-the-art implementations of previously considered algorithms, see \Cref{sec:performance}.

In \Cref{sec:wilf}, we apply the presented algorithm to affirmatively settle \emph{Wilf's conjecture} for all numerical semigroups of multiplicity~$19$ by iterating, up to a certain symmetry of order~$18$, through all faces of the \emph{Kunz cone} (which is a certain unbounded polyhedron), identifing the \emph{bad faces} which possibly yield counterexamples to Wilf's conjecture, and then checking that these do indeed not yield such counterexamples.
This is based on recent work of W.~Bruns, P.~García-Sánchez, C.~O'Neill and D.~Wilburne~\cite{Bruns} who developed this approach to the conjecture and were able to settle it up to multiplicity~$18$.

\medskip

In \Cref{sec:runtimes}, we finally collect detailed runtime comparisions between the implementation of the presented algorithm with the state-of-the-art implementations in \polymake\ and in \normaliz.

\subsection*{Acknowledgements}

We thank Michael Joswig and Winfried Bruns for valuable discussions and for providing multiple relevant references.
We further thank Jean-Philippe Labbé for pointing us to \cite{Bruns} and all participants of the trac ticket in \sage\footnotemark[1] for stimulating discussions.

\section{Formal framework}
\label{sec:framework}

Let $(\P, \leq)$ be a finite poset and denote by $\prec$ its cover relations.
We usually write $\P$ for $(\P,\leq)$ and write $\Pop$ for the opposite poset $(\Pop,\leq_{\operatorname{op}})$ with $b \leq_{\operatorname{op}} a$ if $a \leq b$.
For $a,b \in \P$ with $a \leq b$ we denote the interval as $[a,b] = \{p \in \P \mid a \leq p \leq b\}$.
If~$\P$ has a lower bound~$\hat{0}$, its \defn{atoms} are the upper covers of the lower bound,
\[
  \At(\P) = \{p \in \P \mid \hat{0} \prec p\}
\]
and, for $p \in \P$, we write $\At(p) = \{a \in \At(\P) \mid p \geq a\}$ for the atoms below~$p$.
Analogously, if~$\P$ has an upper bound $\hat{1}$, its \defn{coatoms} are the lower covers of the upper bound, $\coAt(\P) = \{p \in \P \mid p \prec \hat{1}\}$.
$\P$ is called \defn{graded} if it admits a rank function $r \colon \P \to \Z$ with $p \prec q \Rightarrow r(p) + 1 = r(q)$.

\begin{definition}
  $\P$ is locally branched if for every saturated chain $a \prec b \prec c$ there exists an element~$d \neq b$ with $a < d < c$.
  If this element is unique, then~$\P$ is said to have the \defn{diamond property}.
\end{definition}

The diamond property is a well-known property of face lattices of polytopes, see~\cite[Theorem~2.7(iii)]{ZieglerLecturesPolytopes}.
The property of being locally branched has also appeared in the literature in contexts different from the present under the name \emph{$2$-thick lattices}, see for example~\cite{MR1935777} and the references therein.

\medskip

An obvious example of a locally branched lattice is the boolean lattice~$B_n$ given by all subsets of $\{1,\dots,n\}$ ordered by containment.
We will later see that all locally branched lattices are meet semi-sublattices of~$B_n$.

\medskip

In the following, we assume~$\P$ to be a finite lattice with meet operation~$\wedge$, join operation~$\vee$, lower bound~$\hat{0}$ and upper bound~$\hat{1}$.
We say that
\begin{itemize}
  \item~$\P$ is \defn{atomic} if all elements are joins of atoms,
  \item~$\P$ is \defn{coatomic} if all elements are meets of coatoms,
  \item $p \in \P$ is \defn{join-irreducible} if~$p$ has a unique lower cover $q \prec p$,
  \item $p \in \P$ is \defn{meet-irreducible} if~$p$ has a unique upper cover $p \prec q$.
\end{itemize}

Atoms are join-irreducible and coatoms are meet-irreducible.
The following classification of atomic and coatomic lattices is well-known.

\begin{lemma}
\label{lem:meet_irreducible}
  We have that
  \begin{enumerate}
      \item~$\P$ is atomic if and only if the only join-irreducible elements are the atoms,\label{item:atomic_join-irreducible}
      \item~$\P$ is coatomic if and only if the only meet-irreducible elements are the coatoms.
    \end{enumerate}
\end{lemma}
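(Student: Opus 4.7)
The plan is to establish (i) directly and deduce (ii) by passing to the opposite poset $\Pop$: since $p$ is meet-irreducible in $\P$ exactly when $p$ is join-irreducible in $\Pop$, and $\P$ is coatomic exactly when $\Pop$ is atomic, statement (ii) for $\P$ reduces to statement (i) for $\Pop$.

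For the forward direction of (i), I would argue that if $\P$ is atomic then any join-irreducible $p$ with unique lower cover $q$ must itself be an atom. The key observation is that in a finite poset every $x<p$ extends to a saturated chain terminating at $p$, so $x$ lies below some lower cover of $p$; uniqueness of the cover forces $x\leq q$. Applying this to each atom $a\in\At(p)$ with $a<p$ yields $a\leq q$, so if $p$ were not itself an atom then $p=\bigvee\At(p)\leq q<p$, a contradiction.

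The substantive direction is the converse, and this is where I expect the main work. My approach is by contradiction: assume that the only join-irreducibles are atoms, and pick a $\leq$-minimal element $p$ that is not a join of atoms. Then $p$ is neither $\hat 0$ (the empty join) nor itself an atom. The aim is to show that $p$ must be join-irreducible, because then the hypothesis forces $p$ to be an atom, contradicting the choice of $p$.

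The main ingredient I plan to invoke is the classical fact about finite lattices that any two distinct lower covers $q_1\neq q_2$ of $p$ satisfy $q_1\vee q_2=p$: indeed $q_1\vee q_2\leq p$, while $q_1<q_1\vee q_2$ (otherwise $q_2\leq q_1$ and $q_2=q_1$), so the covering relation $q_1\prec p$ forces $q_1\vee q_2=p$. Combined with the minimality of $p$ — which makes $q_1$ and $q_2$ joins of atoms — this expresses $p$ itself as a join of atoms, contradicting the choice of $p$. Hence $p$ has a unique lower cover and is join-irreducible, which closes the argument. The only real pitfall I foresee is bookkeeping at the boundary (treating $\hat 0$ as the empty join and atoms as the trivial join of themselves), rather than any deeper obstacle.
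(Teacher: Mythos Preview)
Your proof is correct and follows essentially the same route as the paper: reduce (ii) to (i) by duality, and for the converse of (i) pick a $\leq$-minimal $p$ that is not a join of atoms and argue that it must be join-irreducible. The only variation is in this last step: the paper identifies the unique lower cover directly as $\bigvee\At(p)$ (since every $q<p$ satisfies $q=\bigvee\At(q)\leq\bigvee\At(p)<p$ by minimality), whereas you invoke the lattice fact that two distinct lower covers would join to $p$; both are equally valid and of comparable length.
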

\begin{proof}
  First observe that for all $p,q \in \P$ we have $p \geq q \Rightarrow \At(p) \supseteq \At(q)$ and $p \geq \bigvee\At(p)$.
  Moreover,~$\P$ is atomic if and only if $p = \bigvee\At(p)$ for all $p \in \P$.

  Assume that $\P$ is atomic and let $q \in \P$ join-irreducible and $p \prec q$.
  Because we have $\At(p) \neq \At(q)$ it follows that $p = \hat 0$.
  Next assume that $\P$ is not atomic and let $p \in \P$ minimal such that $p > \bigvee \At(p)$.
  If $q < p$ then by minimality $q = \bigvee \At(q)$.
  It follows that $q \leq \bigvee \At(p)$ and $p$ is join-irreducible.

  The second equivalence is the first applied to~$\Pop$.
\end{proof}

\begin{examples}
  The face lattice of a polytope has the diamond property, it is atomic and coatomic, and every interval is again the face lattice of a polytope.
  The face lattice of an (unbounded) polyhedron might neither be atomic nor coatomic as witnessed by the face lattice of the positive orthant in $\R^2$ with five faces.
\end{examples}

The reason to introduce locally branched posets is the following relation to atomic and coatomic lattices, which has, to the best of our knowledge, not appeared in the literature.

\begin{proposition}
\label{prop:locally_branched_atomic_coatomic}
  The following statements are equivalent:
  \begin{enumerate}
      \item~$\P$ is locally branched,  \label{item:locally_branched}
      \item every interval of~$\P$ is atomic,  \label{item:atomic}
      \item every interval of~$\P$ is coatomic.  \label{item:coatomic}
  \end{enumerate}
\end{proposition}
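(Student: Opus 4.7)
The plan is to show \eqref{item:locally_branched}$\Rightarrow$\eqref{item:atomic}, then \eqref{item:atomic}$\Rightarrow$\eqref{item:locally_branched}, and finally obtain the equivalence \eqref{item:locally_branched}$\Leftrightarrow$\eqref{item:coatomic} by duality. For the latter, I would observe that an interval $[a,b]$ of $\P$ corresponds to the interval $[b,a]$ of $\Pop$, and that ``atomic'' becomes ``coatomic'' under passage to the opposite. Moreover, the locally branched property is manifestly self-dual, since a saturated chain $a \prec b \prec c$ in $\P$ is a saturated chain $c \prec_{\op} b \prec_{\op} a$ in $\Pop$ and the interval $(a,c)$ is the same set in both posets. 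Hence \eqref{item:coatomic} applied to $\P$ is \eqref{item:atomic} applied to $\Pop$, and it suffices to establish \eqref{item:locally_branched}$\Leftrightarrow$\eqref{item:atomic}.

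For \eqref{item:locally_branched}$\Rightarrow$\eqref{item:atomic}, I would first note that every interval $[a,b]$ of a locally branched lattice is itself locally branched: a saturated chain $x \prec y \prec z$ inside $[a,b]$ is also saturated in $\P$, and any witness $d \neq y$ with $x<d<z$ automatically lies in $[a,b]$. It therefore suffices to show that a locally branched finite lattice $\P$ is atomic. Using \Cref{lem:meet_irreducible}\eqref{item:atomic_join-irreducible}, I would argue that any join-irreducible $p$ must be an atom. Otherwise its unique lower cover $q \prec p$ satisfies $q \neq \hat{0}$, so one can pick some $r$ with $r \prec q$; applying the locally branched property to the saturated chain $r \prec q \prec p$ produces an element $d \neq q$ with $r < d < p$. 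Choosing any $d'$ with $d \leq d' \prec p$ gives a lower cover $d'$ of $p$ distinct from $q$ (otherwise $d \leq q$, forcing $d = q$ since $r \prec q$), contradicting the join-irreducibility of $p$.

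For \eqref{item:atomic}$\Rightarrow$\eqref{item:locally_branched}, I would start from a saturated chain $a \prec b \prec c$ and apply atomicity to the interval $[a,c]$. Writing $c$ as the join of the atoms of $[a,c]$ (that is, of the upper covers of $a$ inside $[a,c]$), the element $b$ is one such atom. Since the chain $a \prec b \prec c$ has length two, $a \not\prec c$ in $\P$, hence also not in $[a,c]$, so $c$ is not itself an atom and therefore cannot be the join of $\{b\}$ alone. Consequently there exists a second atom $d \neq b$ of $[a,c]$, and $d \neq c$ (as $d$ is an atom while $c$ is not), yielding $a < d < c$.

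I do not anticipate a serious obstacle: the only mildly delicate point is ensuring in the join-irreducible argument that the replacement $d'$ (obtained by pushing $d$ up to a cover of $p$) is really distinct from $q$, and in the converse direction that the second atom of $[a,c]$ is not equal to $c$ itself. Both points reduce to the fact that $a \prec b \prec c$ has length exactly two, so $a \not\prec c$.
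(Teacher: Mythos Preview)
Your proof is correct and follows essentially the same approach as the paper: both reduce to \eqref{item:locally_branched}$\Leftrightarrow$\eqref{item:atomic} by the self-duality of the locally branched property, and both hinge on \Cref{lem:meet_irreducible}\ref{item:atomic_join-irreducible} together with the observation that a non-atom join-irreducible element $x$ with unique lower cover $y$ and some $z \prec y$ yields a three-element interval $[z,x]$. The only cosmetic difference is that the paper argues both directions by contrapositive (exhibiting a three-element interval from a non-atomic interval, and a non-atomic interval from a three-element one), whereas you argue directly (producing a second atom of $[a,c]$, and producing a second lower cover $d'$ of $p$); your extra step of pushing $d$ up to a cover $d' \prec p$ is exactly what the paper's contrapositive avoids, but it is handled correctly.
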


\begin{proof}
  $\P$ is locally branched if and only if $\Pop$ is locally branched.
  Also,~$\P$ is atomic if and only if $\Pop$ is coatomic.
  Hence, it suffices to show~\ref{item:locally_branched} $\Leftrightarrow$~\ref{item:atomic}.
  Suppose~$\P$ is not locally branched.
  Then, there exist $p \prec x \prec q$ such that the interval~$[p,q]$ contains exactly those three elements.
  Clearly, $[p,q]$ is not atomic.
  Now suppose $[p,q] \subseteq \P$ is not atomic and~$x$ is join-irreducible with unique lower cover~$y$ with $p < y \prec x$.
  There exists $z \in [p,q]$ with $z \prec y$ and the interval $[z,x]$ contains exactly those three elements.
\end{proof}

\begin{example}
    On the left an example of a non-graded locally branched lattice.
    On the right an example of an atomic, coatomic lattice, which is not locally branched as the interval between the two larger red elements contains only three elements.

    \begin{center}
        \begin{tikzpicture}[scale=.7]
            \node (b) at (0,3) [circle,fill=black,scale=0.4] {};
            \node (oneface) at (-1,1.5) [circle,fill=black,scale=0.4] {};

            \node (lower1) at (0,1) [circle,fill=black,scale=0.4] {};
            \node (lower2) at (0.5,1) [circle,fill=black,scale=0.4] {};
            \node (lower3) at (1,1) [circle,fill=black,scale=0.4] {};
            \node (upper1) at (0,2) [circle,fill=black,scale=0.4] {};
            \node (upper2) at (0.5,2) [circle,fill=black,scale=0.4] {};
            \node (upper3) at (1,2) [circle,fill=black,scale=0.4] {};

            \node (a) at (0,0) [circle,fill=black,scale=0.4] {};
            \draw (b) -- (oneface) -- (a);
            \draw (b) -- (upper1);
            \draw (b) -- (upper2);
            \draw (b) -- (upper3);
            \draw (a) -- (lower1);
            \draw (a) -- (lower2);
            \draw (a) -- (lower3);
            \draw (upper1) -- (lower1) -- (upper2) -- (lower2) -- (upper3) -- (lower3) -- (upper1);
        \end{tikzpicture}
        \hspace{2cm}
        \begin{tikzpicture}[scale=.7]
            \node (b) at (0,4) [circle,fill=black,scale=0.4] {};
            \node (a) at (0,0) [circle,fill=black,scale=0.4] {};

            \node (lower1) at (-1.5,1) [circle,fill=black,scale=0.4] {};
            \node (lower2) at (-0.5,1) [circle,fill=red,scale=0.6] {};
            \node (lower3) at (0.5,1) [circle,fill=black,scale=0.4] {};
            \node (lower4) at (1.5,1) [circle,fill=black,scale=0.4] {};
            \node (upper1) at (-1.5,3) [circle,fill=black,scale=0.4] {};
            \node (upper2) at (-0.5,3) [circle,fill=black,scale=0.4] {};
            \node (upper3) at (0.5,3) [circle,fill=red,scale=0.6] {};
            \node (upper4) at (1.5,3) [circle,fill=black,scale=0.4] {};
            \node (middle1) at (-1,2) [circle,fill=black,scale=0.4] {};
            \node (middle2) at (0,2) [circle,fill=black,scale=0.4] {};
            \node (middle3) at (1,2) [circle,fill=black,scale=0.4] {};
            \node (middle4) at (2,2) [circle,fill=black,scale=0.4] {};
            \draw (a) -- (lower1) -- (middle1) -- (upper1) -- (b);
            \draw (a) -- (lower2) -- (middle2) -- (upper2) -- (b);
            \draw (a) -- (lower3) -- (middle3) -- (upper3) -- (b);
            \draw (a) -- (lower4) -- (middle4) -- (upper4) -- (b);
            \draw (lower1) -- (middle4) -- (upper1);
            \draw (lower2) -- (middle1) -- (upper2);
            \draw (lower3) -- (middle2) -- (upper3);
            \draw (lower4.center) -- (middle3) -- (upper4);
        \end{tikzpicture}
    \end{center}
\end{example}

Let~$\P$ be a finite locally branched poset with atoms $\{1,\dots,n\}$.
As we have seen that~$\P$ is atomic and thus $p = \bigvee \At(p)$ for all $p \in \P$.
The following proposition underlines the importance of subset checks and of computing intersections to understanding finite locally branched lattices.

\begin{proposition}
\label{prop:atom_representation_of_elements}
  In a finite locally branched lattice it holds that
  \begin{enumerate}
      \item $p \leq q \Leftrightarrow \At(p) \subseteq \At(q).$  \label{item:subsetcheck}
      \item $p \wedge q = \bigvee \left(\At(p) \cap \At(q)\right).$  \label{item:meet}
  \end{enumerate}
\end{proposition}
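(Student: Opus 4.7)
The plan is to reduce both statements to the fact that $\P$ is atomic, which we already have: by \Cref{prop:locally_branched_atomic_coatomic} every interval of a locally branched lattice is atomic, and applying this to the full interval $[\hat 0, \hat 1] = \P$ gives $p = \bigvee \At(p)$ for every $p \in \P$. The text preceding the proposition actually records this observation already, so it can be quoted without further argument.

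For part \ref{item:subsetcheck}, the implication $p \leq q \Rightarrow \At(p) \subseteq \At(q)$ is immediate from transitivity of $\leq$. For the converse, assume $\At(p) \subseteq \At(q)$. Each $a \in \At(p)$ lies in $\At(q)$ and hence satisfies $a \leq q$, so
\[
  p = \bigvee \At(p) \leq q,
\]
using atomicity for the equality and the universal property of the join for the inequality.

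For part \ref{item:meet}, set $m = p \wedge q$. Since $m \leq p$ and $m \leq q$, part \ref{item:subsetcheck} gives $\At(m) \subseteq \At(p) \cap \At(q)$. Conversely, any atom $a \in \At(p) \cap \At(q)$ satisfies $a \leq p$ and $a \leq q$, so $a \leq p \wedge q = m$, whence $\At(p) \cap \At(q) \subseteq \At(m)$. Equality $\At(m) = \At(p) \cap \At(q)$ combined with atomicity yields
\[
  p \wedge q = m = \bigvee \At(m) = \bigvee\bigl(\At(p) \cap \At(q)\bigr).
\]

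There is no real obstacle here; the only thing worth emphasizing is that atomicity of all of $\P$ (not just of proper intervals) is the input that does all the work, and it is guaranteed by the locally branched hypothesis via \Cref{prop:locally_branched_atomic_coatomic}. The proof is therefore a short two-step verification rather than a substantive computation.
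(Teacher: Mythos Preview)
Your proof is correct and follows essentially the same approach as the paper: both reduce everything to atomicity (i.e., $p=\bigvee\At(p)$), which is available from \Cref{prop:locally_branched_atomic_coatomic}. Your treatment of part~\ref{item:meet} is slightly more explicit---you establish $\At(p\wedge q)=\At(p)\cap\At(q)$ and then apply atomicity, whereas the paper bounds $p\wedge q$ from both sides---but this is a cosmetic difference, not a substantive one.
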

\begin{proof}\
  \begin{enumerate}
    \item If $p \leq q$ then clearly $\At(p) \subseteq \At(q)$.
      On the other hand, if $\At(p) \subseteq \At(q)$, then $p = \bigvee \At(p) \leq \bigvee \At(q) = q$, as $\bigvee \At(q)$ is in particular an upper bound for $\At(p)$.
    \item By~\ref{item:subsetcheck} it holds that $\bigvee \left(\At(p) \cap \At(q)\right)$ is a lower bound of~$p$ and $q$.
      Also, $\At(p \wedge q) \subseteq \At(p), \At(q)$ and we obtain
      \[
        \At(p \wedge q) \subseteq \At(p) \cap \At(q). \qedhere
      \]
  \end{enumerate}
\end{proof}

This proposition provides the following meet semi-lattice embedding of any finite locally branched lattice into a boolean lattice.

\begin{corollary}
\label{cor:embedding_to_boolean}
  Let~$\P$ be a finite locally branched lattice with $\At(\P) = \{1, \dots ,n\}$.
  The map $p \mapsto \At(p)$ is a meet semi-sublattice embedding of~$\P$ into the boolean lattice~$B_n$.
\end{corollary}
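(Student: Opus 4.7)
The plan is to let $\varphi \colon \P \to B_n$ be the map $p \mapsto \At(p)$ and to verify three properties: injectivity, order-embedding, and preservation of meets. Each will be an essentially immediate consequence of \Cref{prop:atom_representation_of_elements}, so no real obstacle is expected; the only subtle point is noting why this is merely a \emph{meet} semi-sublattice embedding rather than a full lattice embedding.

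First I would observe that $\varphi$ is order-preserving and order-reflecting, which is precisely the content of \ref{item:subsetcheck} of \Cref{prop:atom_representation_of_elements}: $p \leq q$ in $\P$ if and only if $\At(p) \subseteq \At(q)$ in $B_n$. Injectivity of $\varphi$ then follows immediately, since $\At(p) = \At(q)$ forces $p \leq q$ and $q \leq p$, hence $p = q$. In particular $\varphi$ is an order embedding of posets.

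Next I would verify that $\varphi$ preserves meets, that is, $\At(p \wedge q) = \At(p) \cap \At(q)$. The inclusion $\At(p \wedge q) \subseteq \At(p) \cap \At(q)$ holds because any atom below $p \wedge q$ lies below both $p$ and $q$; this was already noted in the proof of \ref{item:meet} of \Cref{prop:atom_representation_of_elements}. For the reverse inclusion, any atom $a$ with $a \leq p$ and $a \leq q$ is a lower bound of $\{p, q\}$ and therefore satisfies $a \leq p \wedge q$, so $a \in \At(p \wedge q)$.

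Combining these points, $\varphi$ is an injective order embedding that commutes with meets, which is exactly the assertion that $\varphi$ is a meet semi-sublattice embedding of $\P$ into $B_n$. I would close with the remark that $\varphi$ need not preserve joins: in general one only has $\At(p) \cup \At(q) \subseteq \At(p \vee q)$, with strict inclusion possible, which is why the statement concerns only the meet semi-sublattice structure.
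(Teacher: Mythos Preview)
Your proof is correct and follows the same approach as the paper, which simply presents the corollary as an immediate consequence of \Cref{prop:atom_representation_of_elements} without further argument. Your explicit verification of injectivity, order-embedding, and meet preservation unpacks exactly what the paper leaves implicit.
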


\begin{example}
  The above embedding does not need to be a join semi-sublattice embedding as witnessed by the face lattice of a square in $\R^2$.
\end{example}

\begin{remark}
    \Cref{prop:atom_representation_of_elements} shows that checking whether the relation $p \leq q$ holds in~$\P$ is algorithmically a subset check $\At(p) \subseteq \At(q)$, while computing the meet is given by computing the intersection $\At(p) \cap \At(q)$.
\end{remark}

Justified by \Cref{cor:embedding_to_boolean}, we restrict our attention in this paper to meet semi-sublattices of the boolean lattice.

\subsection{Variants of this framework and examples}
\label{sec:frameworkvariants}

Before presenting in \Cref{sec:algorithm} the algorithm
to iterate over the elements of a finite locally branched lattice together with variants to avoid any element above certain atoms and to avoid any element below certain coatoms (or other elements of $B_n$),
we give the the following main use cases for such an iterator.

\begin{example}[Polytope]
\label{ex:polytope}
  The face lattice of a polytope~$P$ has the diamond property and is thus locally branched.
\end{example}

\begin{example}[Polyhedron]
\label{ex:polyhedron}
  The face lattice of a polyhedron~$P$ is isomorphic to the one obtained from quotiening out the affine space~$P$ contains.
  Thus, we can assume that~$P$ does not contain an affine line.
  It is well known (see e.g.~\cite[Exercise~2.19]{ZieglerLecturesPolytopes}) that we may add an extra facet $\overline{F}$ to obtain a polytope $\overline{P}$.
  The faces of~$P$ are exactly the faces of $\overline{P}$ not contained in $\overline{F}$ (together with the empty face).
  Thus, the iterator visits all non-empty faces of~$P$ by visiting all faces of $\overline{P}$ not contained in $\overline{F}$.
\end{example}

\begin{example}[Polytopal subdivision of manifold]
\label{ex:manifold}
  The face poset of a finite polytopal subdivision of a closed manifold (compact manifold without boundary).
  Adding an artificial upper bound~$\hat{1}$, this is a finite locally branched lattice.%
\end{example}

\begin{example}[Extended tight spans]
\label{ex:tight_spans}
  We consider extended tight spans as defined in \cite[Section~3]{Hampe2019} as follows:
  Let $P \subset \R^d$ be a finite point configuration, and let~$\Sigma$ be a polytopal complex with vertices~$P$, which covers the convex hull of~$P$.
  We call the maximal cells of~$\Sigma$ facets.
  We can embedd~$\Sigma$ into a closed $d$-manifold $M$:
  In any case, we can add a vertex at infinity and for each face $F$ on the boundary of~$\Sigma$ a face $F \cup \{\infty\}$.
  In many cases, just adding one facet containing all vertices on the boundary will work as well.

  Given a collection~$\Gamma$ of boundary faces of~$\Sigma$.
  We can iterate over all elements of~$\Sigma$, which have empty intersection with~$\Gamma$:
  Iterate over all faces of $M$, which do not contain a vertex of~$\Gamma$ and are not contained in a facet in $M \setminus \Sigma$.
\end{example}

\begin{example}[Closed sets of a matroid]
\label{ex:matroid}
The MacLane--Steinitz exchange property (see e.g.~\cite[Lemma~1.4.2]{Oxley1992}) ensures that the closed sets of a matroid form a locally branched finite lattice.
\end{example}

\begin{example}[Locally branched lattices with non-trivial intersection]\label{ex:unions_of_lattices}
    Let $\P_1,\dots,\P_k$ be finite locally branched meet semi-sublattices of~$B_n$.
    Then the iterator may iterate through all elements of their union by first iterating through $\P_1$, then through all elements in $\P_2$ not contained in $\P_1$ and so on.
\end{example}

\begin{example}[Polyhedral complexes]
\label{ex:polyhedral_complex}
  Using the iteration as in the previous example allows to iterate through polytopal or polyhedral complexes, or through complexes of tight spans.
\end{example}

\section{The algorithm}
\label{sec:algorithm}

Let~$\P$ be a finite locally branched lattice given as a meet semi-sublattice of the boolean lattice~$B_n$.
This is, $\At{\P} = \{1,\dots,n\}$ and $p = \At(p) \in \P$.
The following algorithm is a recursively defined depth-first iterator through the elements of~$\P$.
Given $c \in \P$ and its lower covers $x_1,\dots,x_k$, the iterator yields~$c$ and then computes, one after the other, the lower covers of $x_1,\dots,x_k$, taking into account those to be ignored, and then recursively proceeds.
Being an \emph{iterator} means that the algorithm starts with only assigning the input to the respective variables and then waits in its current state.
Whenever an output is requested, it starts from its current state and runs to the point \textbf{ITERATOR OUTPUT}, outputs the given output, and again waits.
Iterators are regularly used in modern programming languages\footnote{See \url{https://en.wikipedia.org/wiki/Iterator}.}.

\bigskip

\lstset{language    = Python,
        linewidth   = 0.95\textwidth,
        xleftmargin = 0.05\textwidth}
\begin{lstlisting}
(*Algorithm \textbf{FaceIterator} *)

(*\textbf{INPUT}*)
  (* $\bullet$ *) coatoms (* \hspace{30pt} -- list of coatoms of~$\P$ not contained in any of *)ignored_sets
  (* $\bullet$ *) ignored_sets (* \hspace{2.5pt} -- list of subsets of $\{1,\dots,n\}$*)
  (* $\bullet$ *) ignored_atoms (* -- subset of $\{1,\dots,n\}$*)

(*\textbf{PROCEDURE}*)
if coatoms (*\!\! $\neq\ \emptyset$ *): (*\label{line:loop}*)
    a := coatoms.(*\textbf{first\_element}*)() (* \label{line:pick_a} *)
    if a (*$\cap$*) ignored_atoms (*$=\emptyset$*):(*\label{line:avoid_ignored_atoms}*)
        (*\textbf{ITERATOR OUTPUT}*)
          (* $\bullet$ *) a (*\label{line:iteratoroutput}*)

    new_coatoms = (*$\{$*) a (*$\cap$*) b : b (*$\in$*) coatoms (*$\setminus$*) a (*$\}$*)(*\label{line:new_coatoms1}*)
    new_coatoms = (*$\{$*) x (*$\in$*) new_coatoms : x (*$\not\subseteq$*) y (* for all *) y (*$\in$*) ignored_sets (*$\}$*)(*\label{line:new_coatoms2}*)
    new_coatoms = new_coatoms.(*\textbf{inclusion\_maximals}*)()(*\label{line:inclusion_maximal}*)(*\label{line:new_coatoms3}*)
    (*\textbf{FaceIterator}*)( coatoms (*\hspace{31.5pt}*)= new_coatoms, (*\label{line:visit_below}*)
                ignored_sets (*\hspace{5.5pt}*)= ignored_sets.(*\textbf{copy}*)(),
                ignored_atoms = ignored_atoms )

    next_coatoms = (*$\{$*) x (*$\in$*) coatoms : x (*$\not\subseteq$*) a (*$\cup$*) ignored_atoms (*$\}$*)(*\label{line:remove_some_coatoms}*)
    ignored_sets.(*\textbf{append}*)(a (*$\cup$*) ignored_atoms)(*\label{line:append_ignored_sets}*)
    (*\textbf{FaceIterator}*)( coatoms (*\hspace{31.5pt}*)= next_coatoms, (*\label{line:visit_next}*)
                ignored_sets (*\hspace{5.5pt}*)= ignored_sets,
                ignored_atoms = ignored_atoms )
\end{lstlisting}

For polyhedra, a slightly more sophisticated version of this algorithm is implemented in \sage\footnotemark[1].
Before proving the correctness of the algorithm, we provide several detailed examples.
If not mentioned otherwise, we do not ignore any atoms and always set \codestyle{ignored\_atoms} = $\{\}$ in the examples.
We also assume the lists to be ordered lexicographically for iteration.
One may assume that the algorithm additionally visits the upper bound given by the union of the coatoms, whenever this is suitable.

\begin{example}[Square]
    We apply the algorithm to visit faces of a square.%

    \begin{itemize}
        \item \textbf{INPUT}: \codestyle{coatoms} = $[\{1,2\}, \{1,4\}, \{2,3\}, \{3,4\}]$, \codestyle{ignored\_sets} = $[]$
        \item \codestyle{a} = $\{1,2\}$, \textbf{ITERATOR OUTPUT}: $\{1,2\}$
        \item \codestyle{new\_coatoms} = $[\{1\}, \{2\}]$
        \item Apply \textbf{FaceIterator} to sublattice $[\hat{0},\{1,2\}]$
            \begin{itemize}
                \item \textbf{INPUT}: \codestyle{coatoms} = $[\{1\}, \{2\}]$, \codestyle{ignored\_sets} = $[]$
                \item \codestyle{a} = $\{1\}$, \textbf{ITERATOR OUTPUT}: $\{1\}$
                \item \codestyle{new\_coatoms} = $[\emptyset]$
                \item Apply \textbf{FaceIterator} to sublattice $[\hat{0},\{1\}]$
                    \begin{itemize}
                        \item \textbf{INPUT}: \codestyle{coatoms} = $[\emptyset]$, \codestyle{ignored\_sets} = $[]$
                        \item \codestyle{a} = $\emptyset$, \textbf{ITERATOR OUTPUT}: $\emptyset$
                        \item (\codestyle{new\_coatoms} is empty)
                        \item Apply \textbf{FaceIterator} to sublattice $[\hat{0},\hat{0}]$ without output
                        \item Add $\emptyset$ to \codestyle{ignored\_sets} (to the copy in this call of \codestyle{FaceIterator})
                        \item Reapply \textbf{FaceIterator} to sublattice $[\hat{0},\{1\}]$
                        \item \textbf{INPUT}: \codestyle{coatoms} = $[]$, \codestyle{ignored\_sets} = $[\emptyset]$
                    \end{itemize}
                \item \codestyle{ignored\_sets} = $[\{1\}]$
                \item Reapply \textbf{FaceIterator} to sublattice $[\hat{0}, \{1,2\}]$
                \item \textbf{INPUT}: \codestyle{coatoms} = $[\{2\}]$, \codestyle{ignored\_sets} = $[\{1\}]$
                \item \codestyle{a} = $\{2\}$, \textbf{ITERATOR OUTPUT}: $\{2\}$
                \item Apply \textbf{FaceIterator} to sublattice $[\hat{0},\{2\}]$
                    \begin{itemize}
                        \item \textbf{INPUT}: \codestyle{coatoms} = $[]$, \codestyle{ignored\_sets} = $[\{1\}]$
                    \end{itemize}
                \item \codestyle{ignored\_sets} = $[\{1\},\{2\}]$
                \item Reapply \textbf{FaceIterator} to sublattice $[\hat{0},\{1,2\}]$
                \item \textbf{INPUT}: \codestyle{coatoms} = $[]$, \codestyle{ignored\_sets} = $[\{1\},\{2\}]$
            \end{itemize}
        \item \codestyle{ignored\_sets} = $[\{1,2\}]$
        \item Reapply \textbf{FaceIterator} to entire lattice
        \item \textbf{INPUT}: \codestyle{coatoms} = $[\{1,4\}, \{2,3\}, \{3,4\}]$, \codestyle{ignored\_sets} = $[\{1,2\}]$
        \item \codestyle{a} = $\{1,4\}$, \textbf{ITERATOR OUTPUT}: $\{1,4\}$
        \item Apply \textbf{FaceIterator} to sublattice $[\hat{0},\{1,4\}]$
            \begin{itemize}
                \item \textbf{INPUT}: \codestyle{coatoms} = $[\{4\}]$, \codestyle{ignored\_sets} = $[\{1,2\}]$
                \item \codestyle{a} = $\{4\}$, \textbf{ITERATOR OUTPUT}: $\{4\}$
                \item Apply \textbf{FaceIterator} to sublattice $[\hat{0},\{4\}]$ without output
            \end{itemize}
        \item \codestyle{ignored\_sets} = $[\{1,2\}, \{1,4\}]$

        \vspace*{15pt}

        \item ... further outputs: $\{2,3\}$, $\{3\}$, $\{3,4\}$
    \end{itemize}
\end{example}

\clearpage
\begin{example}[Minimal triangulation of $\RP^2$]\label{ex:rp2}\
    \begin{center}
        \begin{tikzpicture}[scale=0.9]
            \tikzstyle{point} = [circle, thick, draw=black, fill=black, scale=0.4]
            \fill[fill=black!20] (0,0) -- (4,0) -- (2,3.46) -- (0,0);
            \node (1)[point, label=below:{$1$}] at (0,0){};
            \node (2)[point, label=below:{$2$}] at (2,0){};
            \node (3)[point, label=below:{$3$}] at (4,0){};
            \node (1a)[point, label=right:{$1$}] at (3,1.73){};
            \node (2a)[point, label=above:{$2$}] at (2,3.46){};
            \node (3a)[point, label=left:{$3$}] at (1,1.73){};
            \node (4)[point, label=below:{$4$}] at (1,0.5){};
            \node (5)[point, label=below:{$5$}] at (3,0.5){};
            \node (6)[point, label=above left:{$6$}] at (2,2.23){};

            \draw (1) -- (3) -- (2a) -- (1) -- (4) -- (2) -- (5) -- (3) -- (5) -- (1a) -- (6) -- (2a) -- (6) -- (3a) -- (4) -- (5) -- (6) -- (4);
        \end{tikzpicture}
    \end{center}

    \begin{itemize}
        \item \textbf{INPUT}: \codestyle{coatoms} = $[\{1,2,4\}, \dots, \{4,5,6\}]$,
            \codestyle{ignored\_sets} = $[]$
        \item \codestyle{a} = $\{1,2,4\}$, \textbf{ITERATOR OUTPUT}: $\{1,2,4\}$, $\{1,2\}$, $\{1\}$, $\emptyset$, $\{2\}$, $\{1,4\}$, $\{4\}$, $\{2,4\}$
        \item \codestyle{a} = $\{1,2,6\}$, \textbf{ITERATOR OUTPUT}: $\{1,2,6\}$, $\{1,6\}$, $\{6\}$, $\{2,6\}$
        \item \codestyle{a} = $\{1,3,4\}$, \textbf{ITERATOR OUTPUT}: $\{1,3,4\}$, $\{1,3\}$, $\{3\}$, $\{3,4\}$
        \item \codestyle{a} = $\{1,3,5\}$, \textbf{ITERATOR OUTPUT}: $\{1,3,5\}$, $\{1,5\}$, $\{5\}$, $\{3,5\}$
        \item \codestyle{a} = $\{1,5,6\}$, \textbf{ITERATOR OUTPUT}: $\{1,5,6\}$, $\{5,6\}$
        \item \codestyle{a} = $\{2,3,5\}$, \textbf{ITERATOR OUTPUT}: $\{2,3,5\}$, $\{2,3\}$, $\{2,5\}$
        \item \codestyle{a} = $\{2,3,6\}$, \textbf{ITERATOR OUTPUT}: $\{2,3,6\}$, $\{3,6\}$
        \item \codestyle{a} = $\{2,4,5\}$, \textbf{ITERATOR OUTPUT}: $\{2,4,5\}$, $\{4,5\}$
        \item \codestyle{a} = $\{3,4,6\}$, \textbf{ITERATOR OUTPUT}: $\{3,4,6\}$, $\{4,6\}$
        \item \codestyle{a} = $\{4,5,6\}$, \textbf{ITERATOR OUTPUT}: $\{4,5,6\}$
    \end{itemize}
\end{example}

\begin{example}[Tight span]\

    \begin{center}
        \begin{tikzpicture}[scale=0.9]
            \tikzstyle{point} = [circle, thick, draw=black, fill=black, scale=0.4]
            \fill[fill=black!20] (0,0) -- (3,0) -- (3,2) -- (0,2);
            \node (3)[point, label=below:{$3$}] at (0,0){};
            \node (4)[point, label=below:{$4$}] at (3,0){};
            \node (5)[point, label=above:{$5$}] at (3,2){};
            \node (6)[point, label=above:{$6$}] at (0,2){};
            \node (1)[point, label=left:{$1$}] at (1,1){};
            \node (2)[point, label=right:{$2$}] at (2,1){};

            \draw (3) -- (4) -- (5) -- (6) -- (3) -- (1) -- (6) -- (1) -- (2) -- (4) -- (2) -- (5);
        \end{tikzpicture}
    \end{center}
    \begin{itemize}
        \item \textbf{INPUT}: \codestyle{coatoms} = $[\{1,2,3,4\}, \{1,2,5,6\}, \{1,3,6\}, \{2,4,5\}]$, \newline
            \codestyle{ignored\_sets} = $[\{3,4,5,6\}]$, \codestyle{ignored\_atoms} = $\{3,4,5,6\}$
        \item \codestyle{a} = $\{1,2,3,4\}$, no output of $\{1,2,3,4\}$
        \item \codestyle{new\_coatoms} = $\{\{1,2\}, \{1,3\}, \{2,4\}\}$
            \begin{itemize}
                \item \textbf{INPUT}: \codestyle{coatoms} = $[\{1,2\}, \{1,3\}, \{2,4\}]$,\newline
                    \codestyle{ignored\_sets} = $[\{3,4,5,6\}]$, \codestyle{ignored\_atoms} = $\{3,4,5,6\}$
                \item \codestyle{a} = $\{1,2\}$, \textbf{ITERATOR OUTPUT}: $\{1,2\}$, $\{1\}$, $\{2\}$
                \item \codestyle{ignored\_sets} = $[\{3,4,5,6\}, \{1,2,3,4,5,6\}]$, \codestyle{coatoms} = $\{\}$
            \end{itemize}
        \item \codestyle{ignored\_sets} = $[\{3,4,5,6\}, \{1,2,3,4,5,6\}]$, \codestyle{coatoms} = $\{\}$
    \end{itemize}
\end{example}

\clearpage

\begin{example}[Polyhedral complex]\

    \begin{center}
        \begin{tikzpicture}[scale=0.9]
            \tikzstyle{point} = [circle, thick, draw=black, fill=black, scale=0.4]
            \fill[fill=black!20] (0,0) -- (2,0) -- (2,2) -- (0,2) -- (0,0);
            \fill[fill=black!20] (0,0) -- (-2,0) -- (-2,2) -- (0,2) -- (0,0);
            \fill[fill=black!20] (0,0) -- (2,0) -- (2,-2) -- (0,-2) -- (0,0);

            \node (0)[point, label=below left:{$O$}] at (0,0){};
            \node (W)[label=left:{$W$}] at (-2,0){};
            \node (E)[label=right:{$E$}] at (2,0){};
            \node (N)[label=above:{$N$}] at (0,2){};
            \node (S)[label=below:{$S$}] at (0,-2){};

            \draw (0,0) -- (2,0);
            \draw (0,0) -- (-2,0);
            \draw (0,0) -- (0,2);
            \draw (0,0) -- (0,-2);
        \end{tikzpicture}
    \end{center}
    \emph{Incorrect application} by applying to all polyhedra in the complex:
    \begin{itemize}
        \item \textbf{INPUT}: \codestyle{coatoms} = $[\{W,N,0\}, \{N,E,0\}, \{S,E,0\}]$, \codestyle{ignored\_sets} = $[]$
        \item \codestyle{a} = $\{W,N,0\}$, \textbf{ITERATOR OUTPUT}: $\{W,N,0\}$
        \item \codestyle{new\_coatoms} = $[\{N,0\}]$, \textbf{ITERATOR OUTPUT}: $\{N,0\}$
        \item \codestyle{ignored\_sets} = $[\{W,N,0\}]$
        \item \codestyle{a} = $\{N,E,0\}$, \textbf{ITERATOR OUTPUT}: $\{N,E,0\}$
        \item \codestyle{new\_coatoms} = $[\{E,0\}]$, \textbf{ITERATOR OUTPUT}: $\{E,0\}$
        \item \codestyle{ignored\_sets} = $[\{W,N,0\}, \{N,E,0\}]$
        \item \codestyle{a} = $\{S,E,0\}$, \textbf{ITERATOR OUTPUT}: $\{S,E,0\}$
        \item \codestyle{new\_coatoms} = $[]$
    \end{itemize}
    \emph{Correct application} by applying successively to all faces of all polyhedra:
    \begin{itemize}
        \item \textbf{ITERATOR OUTPUT}: $\{W,N,0\}$ (output of $\hat{1}$ before applying \textbf{FaceIterator})
        \item Apply algorithm for $\{W,N,0\}$:
            \begin{itemize}
                \item \textbf{INPUT}: \codestyle{coatoms} = $[\{W,0\}, \{N,0\}]$, \codestyle{ignored\_sets} = $[\{W,N\}]$
                \item \textbf{ITERATOR OUTPUT}: $\{W,0\}$, $\{0\}$, $\{N,0\}$
            \end{itemize}
        \item \textbf{ITERATOR OUTPUT}: $\{N,E,0\}$
        \item Apply algorithm for $\{N,E,0\}$:
            \begin{itemize}
                \item \textbf{INPUT}: \codestyle{coatoms} = $[\{E,0\}]$, \codestyle{ignored\_sets} = $[\{W,N,0\}, \{N,E\}]$
                \item \textbf{ITERATOR OUTPUT}: $\{E,0\}$
            \end{itemize}
        \item \textbf{ITERATOR OUTPUT}: $\{S,E,0\}$
        \item Apply algorithm for $\{S,E,0\}$:
            \begin{itemize}
                \item \textbf{INPUT}: \codestyle{coatoms} = $[\{S,0\}]$, \codestyle{ignored\_sets} = $[\{W,N,0\}, \{N,E,0\}, \{S,E\}]$
                \item \textbf{ITERATOR OUTPUT}: $\{S,0\}$
            \end{itemize}
    \end{itemize}
\end{example}

\subsection{Correctness of the algorithm}

As assumed, let~$\P$ be a locally branched meet semi-sublattice of the boolean lattice $B_n$.
In the following properties and their proofs, we indeed see that if~$\P$ is any meet semi-sublattice of~$B_n$, the algorithm visits exactly once each element $p \in \P$ not contained in any of \codestyle{ignored\_sets} and not containing any of \codestyle{ignored\_atoms} if the interval $[p, \hat{1}]$ is locally branched.

\begin{proposition}
  The algorithm \emph{\textbf{FaceIterator}} is well-defined in the following sense:
  Let $a \in $ \codestyle{coatoms}.
  Then
  \begin{enumerate}
    \item The call of \emph{\textbf{FaceIterator}} in line~\ref{line:visit_below} applies the algorithm to the sublattice $[\hat 0,a]$. \label{it:visit_below}
    \item The call of \emph{\textbf{FaceIterator}} in line~\ref{line:visit_next} applies the algorithm to~$\P$ with $a\ \cup $ \codestyle{ignored\_atoms} appended to \codestyle{ignored\_sets} and all coatoms contained in $a\ \cup $ \codestyle{ignored\_atoms} removed. \label{it:visit_next}
  \end{enumerate}
\end{proposition}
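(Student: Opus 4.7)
My plan is to treat the two parts separately, since part~(ii) is essentially a bookkeeping check on what the second recursive call receives, while part~(i) carries the real content. For~(ii), I would simply read off from the three lines preceding the second recursive call that \codestyle{next\_coatoms} is \codestyle{coatoms} with every element contained in $a \cup $ \codestyle{ignored\_atoms} removed, and that $a \cup $ \codestyle{ignored\_atoms} is appended to \codestyle{ignored\_sets}. The only non-trivial point to check is that the input precondition---namely, that \codestyle{coatoms} contains no coatom lying inside any element of \codestyle{ignored\_sets}---is preserved under this update; but this is immediate, since the only new entry in \codestyle{ignored\_sets} is $a \cup $ \codestyle{ignored\_atoms}, and we have just removed from \codestyle{coatoms} every coatom contained in it.

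For part~(i), my plan is to show that \codestyle{new\_coatoms} equals precisely the set of coatoms of $[\hat{0},a]$ not contained in any element of \codestyle{ignored\_sets}. Since the coatoms of $[\hat{0},a]$ are exactly the lower covers of $a$ in $\P$, I would first establish, ignoring \codestyle{ignored\_sets} for the moment, that the lower covers of $a$ in $\P$ coincide with the inclusion-maximal elements of $S := \{a \cap c : c \in \codestyle{coatoms},\ c \neq a\}$. For the forward direction I would invoke \Cref{prop:locally_branched_atomic_coatomic}, which tells me that $[x,\hat{1}]$ is coatomic for any $x \prec a$, so $x$ is the meet of the coatoms of $\P$ above it; since $x < a$, at least one such coatom $c$ must differ from $a$, and \Cref{prop:atom_representation_of_elements} then yields $x \leq a \cap c < a$, which forces $x = a \cap c \in S$ by the cover assumption. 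The reverse direction---that every inclusion-maximal element of $S$ is actually a lower cover of $a$---follows by applying the forward direction to any lower cover of $a$ sitting above the given element and invoking maximality.

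The step I expect to be the main obstacle is the interaction with \codestyle{ignored\_sets}, because the algorithm filters $S$ first and only then extracts inclusion-maximals, so I must rule out the possibility that the filter discards an element whose presence is needed to witness maximality of some allowed lower cover. I would resolve this using the following containment-chain observation: if $x \in \P$ is not contained in any element of \codestyle{ignored\_sets} and $c$ is any coatom of $\P$ with $c \geq x$, then $c$ itself cannot lie in any ignored set $y$, since otherwise $x \subseteq c \subseteq y$ would contradict the hypothesis on $x$. This ensures that every coatom needed to realize an allowed lower cover of $a$ as some $a \cap c$ is still present in the input \codestyle{coatoms}, so filtering $S$ by \codestyle{ignored\_sets} and then extracting inclusion-maximals yields exactly the coatoms of $[\hat{0},a]$ satisfying the precondition of the first recursive call.
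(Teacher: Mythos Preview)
Your proposal is correct and follows essentially the same approach as the paper: for~(ii) you do the same bookkeeping check, and for~(i) you obtain a second coatom $c\neq a$ above a given lower cover $x\prec a$ (you via coatomicity of $[x,\hat 1]$ from \Cref{prop:locally_branched_atomic_coatomic}, the paper directly from the locally branched definition), deduce $x=a\cap c$, and use $x\subseteq c$ to see that $c$ survives the \codestyle{ignored\_sets} filter. You are simply more explicit than the paper about the reverse inclusion and about why filtering before taking inclusion-maximals causes no harm.
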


\begin{proof}
  The proof of~\ref{it:visit_next} is obvious.
  To prove~\ref{it:visit_below}, we have to show that the construction of \codestyle{new\_coatoms} in lines~\ref{line:new_coatoms1}--\ref{line:new_coatoms3} is correct.
  First, observe that all elements in \codestyle{new\_coatoms} are strictly below the element~$a$.
  Next, let $x \prec a \prec \hat 1$ in~$\P$.
  Since~$\P$ is locally branched there is an element $b \neq a$ with $ x < b \prec \hat 1$, implying $x = a \cap b$.
  If~$x$ is not contained in any element in \codestyle{ignored\_sets}, then the same holds for~$b$ and thus, $b \in$ \codestyle{coatoms} and $x \in$ \codestyle{new\_coatoms}.
  This implies that \codestyle{new\_coatoms} are exactly the lower covers of~$a$ not contained in an element of \codestyle{ignored\_sets}, as desired.
\end{proof}

\begin{theorem}
\label{thm:algorithm}
  The algorithm \emph{\textbf{FaceIterator}} iterates exactly once over all element in~$\P$ which are not contained in any subset of \codestyle{ignored\_sets}, and do not contain any element in \codestyle{ignored\_atoms}.
\end{theorem}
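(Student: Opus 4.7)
The plan is to induct on the number $N$ of elements $p \in \P$ that satisfy $p \neq \hat{1}$, $p \not\subseteq s$ for all $s \in \codestyle{ignored\_sets}$, and $p \cap \codestyle{ignored\_atoms} = \emptyset$---that is, the ``targets'' the algorithm is supposed to hit. This measure strictly decreases on both recursive calls: the first call passes to the strictly smaller lattice $[\hat{0}, a]$, and the second call appends $a \cup \codestyle{ignored\_atoms}$ to \codestyle{ignored\_sets}, which removes at least the target $a$ itself (a target in the current call by the invariant below, and no longer a target in the subsequent call). Throughout I would maintain the invariant that the input \codestyle{coatoms} equals the list of coatoms of the ambient lattice not contained in any element of \codestyle{ignored\_sets}; the preceding proposition establishes that both recursive calls preserve this invariant, and it is assumed at the top level.

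For the base case, suppose \codestyle{coatoms} is empty. Then by the invariant every coatom $c$ of $\P$ is contained in some $s \in \codestyle{ignored\_sets}$. Any $p \in \P$ with $p \neq \hat{1}$ lies below some coatom $c$, so $p \subseteq c \subseteq s$ and $p$ is not a target. Since the algorithm outputs nothing in this case, the conclusion holds.

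For the inductive step, let $a$ be the first coatom and partition the set of targets of $\P$ into the three disjoint classes $\{a\}$, $\{p \in \P : p < a\}$, and $\{p \in \P \setminus \{\hat{1}\} : p \not\leq a\}$. The explicit output at line \ref{line:iteratoroutput} handles the first class: by the invariant $a$ is outside every ignored set, so $a$ is a target iff $a \cap \codestyle{ignored\_atoms} = \emptyset$, which is precisely the guard at line \ref{line:avoid_ignored_atoms}. The first recursive call handles the second class: by the preceding proposition it is applied to $[\hat{0}, a]$, which is itself locally branched as a sub-interval (see \Cref{prop:locally_branched_atomic_coatomic}), and the inductive hypothesis then yields that it outputs exactly the targets of $[\hat{0}, a] \setminus \{a\}$, i.e.\ the targets $p < a$. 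The second recursive call handles the third class: again by the preceding proposition it is applied to $\P$ with $a \cup \codestyle{ignored\_atoms}$ appended to \codestyle{ignored\_sets}, and by induction it outputs exactly the targets $p \neq \hat{1}$ with $p \not\leq a$. Uniqueness follows because the three classes are disjoint and each element is caught by exactly one branch: $a$ cannot reappear in the first call since it is the new $\hat{1}$, and it is filtered out of the second call by the newly appended ignored set.

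The main obstacle will be the bookkeeping around \codestyle{ignored\_sets} versus \codestyle{ignored\_atoms}: the atom condition is only encoded into an ignored set at the moment $a \cup \codestyle{ignored\_atoms}$ is appended at line \ref{line:append_ignored_sets}, and one must verify that this encoding faithfully represents the combined constraint across both recursive calls. A related subtlety is the use of \texttt{.copy()} at line \ref{line:visit_below}, which ensures that internal modifications during the first recursive call do not corrupt the state of \codestyle{ignored\_sets} required by the subsequent lines \ref{line:remove_some_coatoms}--\ref{line:visit_next}, without which the invariant would fail to propagate. Finally, the statement should be interpreted as pertaining to $\P \setminus \{\hat{1}\}$, since the top element of the ambient lattice is emitted only externally, as noted in the remark preceding the examples.
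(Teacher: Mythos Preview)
Your overall architecture---the three-way partition of targets into $\{a\}$, $\{p : p < a\}$, and $\{p : p \not\leq a\}$, with each piece handled respectively by the explicit output, the first recursive call, and the second recursive call---is exactly the paper's argument. The gap is in the induction variable. You claim that $N$, the number of targets, strictly decreases on both recursive calls, and you justify the decrease for the second call by asserting that ``$a$ is a target in the current call by the invariant.'' But the invariant only guarantees that $a$ lies in no element of \codestyle{ignored\_sets}; it says nothing about $a \cap \codestyle{ignored\_atoms}$. If $a$ contains an ignored atom, then $a$ is not a target, and $N$ need not drop.

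Concretely, take $\P = B_4$ with $\codestyle{ignored\_atoms} = \{3,4\}$ and $\codestyle{ignored\_sets} = [\,]$. Every coatom of $B_4$ meets $\{3,4\}$, so no coatom is a target. The four targets are $\emptyset,\{1\},\{2\},\{1,2\}$. If the first coatom is $a=\{1,2,3\}$, then all four targets lie strictly below~$a$, and the first recursive call (on $[\hat 0,a]\cong B_3$ with the same ignored data) has exactly the same $N=4$. A symmetric example with a nonempty \codestyle{ignored\_sets} shows the second call can also preserve~$N$. So the induction on~$N$ is not well-founded; relatedly, your base case ``\codestyle{coatoms} empty'' does not match the induction variable, since one can have $N=0$ with \codestyle{coatoms} nonempty and vice versa.

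The paper sidesteps all of this by inducting on $|\codestyle{coatoms}|$: \codestyle{new\_coatoms} is extracted from at most $|\codestyle{coatoms}|-1$ intersections, and \codestyle{next\_coatoms} drops at least~$a$ itself, so both recursive calls strictly decrease this count regardless of \codestyle{ignored\_atoms}. Swap in that measure and the rest of your argument goes through unchanged.
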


\begin{proof}
  We argue by induction on the cardinality of \codestyle{coatoms}.
  First note that the cardinalities of \codestyle{new\_coatoms} and \codestyle{next\_coatoms} in the two subsequent calls of \textbf{FaceIterator} in lines~\ref{line:visit_below} and~\ref{line:visit_next} are both strictly smaller than the cardinality of \codestyle{coatoms}.
  If \codestyle{coatoms} $=\emptyset$, then all elements of~$\P \setminus \hat 1$ are contained in elements of \codestyle{ignored\_sets}, and the algorithm does correctly not output any element.
  Suppose that \codestyle{coatoms} $\neq\emptyset$ and let $a$ be its first element assigned in line~\ref{line:pick_a}.
  Let $p \in \P$.
  If~$p$ is contained in an element of \codestyle{ignored\_sets} or contains an element in \codestyle{ignored\_atoms} then it is not outputed by the algorithm.
  Otherwise,
  \begin{itemize}
    \item if $p = a$, then the algorithm outputs~$p$ correctly in line~\ref{line:iteratoroutput},
    \item if $p < a$, then~$p$ is outputed in the call of \textbf{FaceIterator} in line~\ref{line:visit_below} by induction,
    \item if $p \not\leq a$, then~$p \not\leq a\ \cup$ \codestyle{ignored\_atoms} and~$p$ is outputed in the call of \textbf{FaceIterator} in line~\ref{line:visit_next} by induction.
    \qedhere
  \end{itemize}
\end{proof}

\subsection{Variants of the algorithm}

We finish this section with a dualization property followed by explicitly stating the result when applying the algorithm for the variants discussed in \Cref{sec:frameworkvariants}.

\begin{corollary}
\label{cor:algorithm_opposite}
  If \codestyle{ignored\_sets} is a list of coatoms, then the algorithm can also be applied to~$\Pop$ with the roles of atoms and coatoms interchanged.
\end{corollary}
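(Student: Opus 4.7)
The plan is to transfer the validity result, Theorem \ref{thm:algorithm}, from $\P$ to $\Pop$ via the embedding duality developed earlier. Since the condition of being locally branched is symmetric under order reversal, $\Pop$ is again a finite locally branched lattice. Applying Corollary \ref{cor:embedding_to_boolean} to $\Pop$, the map $p \mapsto \coAt_\P(p)$ yields a meet semi-sublattice embedding of $\Pop$ into $B_m$ where $m = |\coAt(\P)|$. Under this embedding, the atoms of $\Pop$ are precisely the coatoms of $\P$ (each corresponding to a singleton of $B_m$), and the coatoms of $\Pop$ are precisely the atoms of $\P$ (with each atom $a$ of $\P$ corresponding to $\{c \in \coAt(\P) : a \leq c\}$).

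Next I would translate the three inputs of \textbf{FaceIterator} from $\P$ to $\Pop$. The list of coatoms of $\P$ is replaced by the list of atoms of $\P$, which are the coatoms of $\Pop$. The hypothesis that \codestyle{ignored\_sets} consists of coatoms of $\P$ is used crucially here: each such coatom corresponds to an atom of $\Pop$, so the list becomes a valid \codestyle{ignored\_atoms} input for the dual algorithm. Symmetrically, each atom $a$ in the original \codestyle{ignored\_atoms} corresponds, via the dual embedding, to the subset $\{c \in \coAt(\P) : a \leq c\} \subseteq \coAt(\P)$, and the collection of these subsets becomes the new \codestyle{ignored\_sets}.

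Finally, I would verify that the filter conditions translate consistently: by Proposition \ref{prop:atom_representation_of_elements}(i) applied in both $\P$ and $\Pop$, the relation $p \leq q$ in $\P$ corresponds to $\coAt_\P(q) \subseteq \coAt_\P(p)$, so being contained in an ignored set in $\P$ becomes containing the corresponding atom in $\Pop$, and containing an ignored atom in $\P$ becomes being contained in the corresponding ignored set in $\Pop$. The main obstacle is precisely what motivates the hypothesis: an arbitrary subset of $\At(\P)$ need not correspond to any element of $\Pop$ under the embedding, so without the restriction to coatoms the translation of \codestyle{ignored\_sets} would not be available. Once the translation is in place, the correctness of the dual algorithm follows directly from Theorem \ref{thm:algorithm} applied to $\Pop$.
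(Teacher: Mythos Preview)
Your argument is correct and is precisely the duality argument the paper has in mind; the paper states this corollary without proof, treating it as an immediate consequence of the self-dual nature of the locally branched condition together with Theorem~\ref{thm:algorithm}. Your write-up makes explicit the one nontrivial point, namely that an arbitrary element of \codestyle{ignored\_sets} need not correspond to anything in the coatom embedding of~$\Pop$, which is exactly why the hypothesis restricting \codestyle{ignored\_sets} to coatoms is needed.
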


We later see in \Cref{thm:runtime_algorithm} that considering~$\Pop$ instead of~$\P$ might be faster as the runtime depends on the number of coatoms.
For example, in \Cref{ex:rp2} one could apply the algorithm to $\P^{\op}$ to improve runtime as there are $10$ facets but only $6$ vertices.

\begin{corollary}
\label{cor:algorithm_polytopes}
  Let~$P$ be a polytope.
  Provided the vertex-facet incidences of~$\P$, the above algorithm visits exactly once all faces of~$P$.
\end{corollary}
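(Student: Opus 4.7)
The plan is to reduce the statement directly to \Cref{thm:algorithm} via \Cref{cor:embedding_to_boolean}, using \Cref{ex:polytope} to verify the locally branched hypothesis.

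First I would set $\P$ to be the face lattice of $P$. By \Cref{ex:polytope}, $\P$ has the diamond property and is therefore locally branched. Consequently, by \Cref{prop:locally_branched_atomic_coatomic}, $\P$ is atomic and coatomic, so the atoms of $\P$ are exactly the vertices of $P$ and the coatoms are exactly the facets of $P$. The vertex–facet incidence data is therefore precisely the data of each coatom written as a subset of atoms, which together with \Cref{prop:atom_representation_of_elements} and \Cref{cor:embedding_to_boolean} realizes $\P$ as a meet semi-sublattice of the boolean lattice $B_n$, where $n$ is the number of vertices of $P$. This is exactly the input format required by \textbf{FaceIterator}.

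Next I would invoke \Cref{thm:algorithm} applied to this input with \codestyle{ignored\_sets} $= []$ and \codestyle{ignored\_atoms} $= \emptyset$. The theorem asserts that the algorithm iterates exactly once over all elements of $\P$ that are not contained in any element of \codestyle{ignored\_sets} and contain no element of \codestyle{ignored\_atoms}; under the empty choices these conditions are vacuous, so the algorithm visits each element of $\P \setminus \{\hat 1\}$ exactly once. Combined with the convention that the upper bound $\hat 1$, given by the union of the coatoms, is emitted once at the outset (as noted in the discussion preceding the examples), this accounts for every face of $P$ exactly once.

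The only subtle point — and the closest thing to an obstacle — is to make sure the input format aligns with the hypotheses of \Cref{thm:algorithm}: one must check that the coatoms list passed to the algorithm contains no element contained in any set of \codestyle{ignored\_sets} (trivial here, as \codestyle{ignored\_sets} is empty), and that \codestyle{coatoms} is exactly the set of facets regarded as subsets of vertices, which is precisely the vertex–facet incidence data. Once this bookkeeping is recorded, the statement follows immediately from \Cref{thm:algorithm}.
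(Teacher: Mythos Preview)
Your proposal is correct and follows precisely the intended reduction: the paper states this corollary without proof as an immediate consequence of \Cref{thm:algorithm} via the locally branched structure of the face lattice (\Cref{ex:polytope}) and its realization as a meet semi-sublattice of $B_n$ (\Cref{cor:embedding_to_boolean}). Your handling of $\hat 1$ via the convention noted before the examples is also exactly what the paper has in mind.
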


\begin{corollary}
\label{cor:algorithm_polyhedra}
  Let~$P$ be an unbounded polyhedron and let $\overline{P}$ be a projectively equivalent polytope with marked face.
  Provided the vertex-facet incidences of $\overline{P}$, the above algorithm visits exactly once all non-empty faces of~$P$.
\end{corollary}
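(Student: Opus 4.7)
The plan is to reduce to Corollary~\ref{cor:algorithm_polytopes} applied to $\overline{P}$ with an appropriate choice of \codestyle{ignored\_sets}. By Example~\ref{ex:polyhedron}, the non-empty faces of the unbounded polyhedron~$P$ are in bijection with those faces of the polytope $\overline{P}$ that are not contained in the marked facet $\overline{F}$; this is the bijection we want to realize by the iterator. First, if $P$ contains an affine line, one quotients out the lineality space to get a polyhedron with the same face lattice, so we may assume $\overline{P}$ exists as described.

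Concretely, I would invoke \textbf{FaceIterator} on the vertex-facet incidences of $\overline{P}$, passing as \codestyle{coatoms} the list of facets of $\overline{P}$ other than $\overline{F}$, setting \codestyle{ignored\_sets} $= [\overline{F}]$ (identifying $\overline{F}$ with its vertex set, a subset of the atoms of the face lattice of $\overline{P}$), and \codestyle{ignored\_atoms} $= \emptyset$. The face lattice of $\overline{P}$ is locally branched by Example~\ref{ex:polytope} together with Proposition~\ref{prop:locally_branched_atomic_coatomic}, and the assumption on \codestyle{coatoms} in the algorithm's input (that no coatom of the input be contained in an element of \codestyle{ignored\_sets}) is satisfied by construction, since the only facet contained in $\overline{F}$ is $\overline{F}$ itself and we have removed it.

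By Theorem~\ref{thm:algorithm}, the iterator then visits exactly once each face of $\overline{P}$ that is not contained in $\overline{F}$. Combining with the bijection of Example~\ref{ex:polyhedron}, this is exactly the set of non-empty faces of~$P$, as required. The two routine verifications are (i) that passing $\overline{F}$ via \codestyle{ignored\_sets} removes precisely the faces of $\overline{P}$ sitting inside $\overline{F}$, which is immediate from Proposition~\ref{prop:atom_representation_of_elements}\ref{item:subsetcheck}, and (ii) that the Example~\ref{ex:polyhedron} correspondence matches the iteration output face-by-face, which is clear since the correspondence is just the inclusion of face lattices. There is no serious obstacle here; the corollary is essentially the packaging of Corollary~\ref{cor:algorithm_polytopes} together with the reduction from Example~\ref{ex:polyhedron}.
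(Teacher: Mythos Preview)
Your proposal is correct and follows exactly the approach implicit in the paper: the corollary is stated without proof there because it is an immediate consequence of Theorem~\ref{thm:algorithm} applied to the locally branched face lattice of $\overline{P}$ with \codestyle{ignored\_sets} $= [\overline{F}]$, combined with the bijection from Example~\ref{ex:polyhedron}. Your write-up makes all the routine verifications explicit (input condition on \codestyle{coatoms}, the subset characterization via Proposition~\ref{prop:atom_representation_of_elements}\ref{item:subsetcheck}), which the paper simply omits.
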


Actually, a non-empty intersection of two faces of $P$ is not contained in the marked facet at infinity.
Hence, one could even use the algorithm without providing the marked facet at infinity.
This might or might not visit the empty face.

\begin{corollary}
\label{cor:algorithm_manifold}
  Let~$P$ be a finite polytopal subidivision of a closed manifold.
  Provided the vertex-facet incidences the above algorithm visits exactly once all faces of~$P$.
\end{corollary}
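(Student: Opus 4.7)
The plan is to reduce the statement directly to \Cref{thm:algorithm} by routing through \Cref{ex:manifold}. First, I would recall that by \Cref{ex:manifold}, adjoining an artificial upper bound $\hat 1$ to the face poset of a finite polytopal subdivision of a closed manifold yields a finite locally branched lattice $\P$. Its coatoms are the facets of $P$, its atoms are the vertices of $P$, and the lower bound $\hat 0$ is the empty face. Thus $\P$ satisfies the hypotheses under which the algorithm is formulated.

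Next, I would verify that the input data for \textbf{FaceIterator} is genuinely available. By \Cref{cor:embedding_to_boolean}, $p \mapsto \At(p)$ realizes $\P$ as a meet semi-sublattice of the boolean lattice $B_n$ on the vertex set. In particular, each coatom (facet) is represented by its set of vertices, which is exactly the vertex-facet incidence data assumed in the hypotheses. Hence we may invoke \textbf{FaceIterator} with \codestyle{coatoms} equal to the list of vertex sets of facets and with \codestyle{ignored\_sets} $= []$ and \codestyle{ignored\_atoms} $= \emptyset$.

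Applying \Cref{thm:algorithm} to this input, the algorithm iterates exactly once over all elements of $\P$ that are not contained in any ignored set and do not contain any ignored atom, i.e.\ over every element of $\P$ below $\hat 1$. Since $\hat 1$ was adjoined artificially, the elements of $\P \setminus \{\hat 1\}$ are precisely the faces of the subdivision $P$, which completes the proof.

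The only nontrivial point, and thus the main thing to watch out for, is that the vertex-facet incidences really do determine the semi-sublattice structure of $\P$ inside $B_n$ — that is, that the embedding $p \mapsto \At(p)$ of \Cref{cor:embedding_to_boolean} sends each facet to its set of vertices. This is immediate here because each cell of a polytopal subdivision is itself a polytope, whose atoms in the intrinsic face lattice are exactly the vertices of $P$ lying in it, so no additional combinatorial information is needed beyond the incidences provided.
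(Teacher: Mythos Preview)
Your proposal is correct and follows exactly the route the paper intends: the corollary is stated without proof because it is an immediate specialization of \Cref{thm:algorithm} via \Cref{ex:manifold}, and you have spelled out precisely this reduction. The additional observation that each cell, being a polytope, has as its atoms exactly the vertices of $P$ lying in it (so that the vertex-facet incidences encode the coatoms of~$\P$ under the embedding of \Cref{cor:embedding_to_boolean}) is the only point one might want to make explicit, and you have done so.
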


\begin{corollary}
\label{cor:algorithm_tight_spans}
  Let~$\Sigma$ be an extended tight span in $\R^d$ as described in \Cref{ex:tight_spans}.
  Let~$V$ be a subset of vertices of~$\Sigma$.
  Provided the facets and the boundary faces in their vertex description, the above algorithm visits exactly once all faces of~$\Sigma$ not containing any vertex of~$V$.
\end{corollary}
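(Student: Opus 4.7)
My plan is to reduce this statement directly to \Cref{thm:algorithm} by embedding $\Sigma$ into a closed $d$-manifold $M$ as described in \Cref{ex:tight_spans} and choosing \codestyle{ignored\_sets} and \codestyle{ignored\_atoms} appropriately. First I would observe that by \Cref{ex:tight_spans}, after adding an artificial upper bound $\hat 1$, the face poset of $M$ is a finite locally branched lattice (this is the content of \Cref{ex:manifold}), so it fits the hypothesis of \Cref{thm:algorithm}. The atoms of this lattice are the vertices of $M$, namely the vertices of $\Sigma$ together with the added vertex(es) at infinity, and the coatoms are the facets of $M$, partitioned into the facets of $\Sigma$ and the extra facets in $M \setminus \Sigma$.

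Next I would set the input parameters of \textbf{FaceIterator} as follows: take \codestyle{coatoms} to be the list of facets of $\Sigma$ (in their vertex description), set \codestyle{ignored\_sets} to be the list of boundary facets of $M \setminus \Sigma$ (again in their vertex description, which is given by hypothesis: each such facet is either a single added cell or a set of the form $F \cup \{\infty\}$ for a boundary face $F$ of $\Sigma$), and set \codestyle{ignored\_atoms} $= V$. The key combinatorial observation is then that a face $p$ of $M$ is a face of $\Sigma$ if and only if $p$ is not contained in any facet of $M \setminus \Sigma$; this is simply because any face of $M$ lies in some facet of $M$, and the facets of $\Sigma$ are exactly those not in $M \setminus \Sigma$. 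Hence the elements of the face lattice of $M$ not contained in any element of \codestyle{ignored\_sets} are exactly the faces of $\Sigma$.

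Putting these two observations together, the elements of the face lattice of $M$ that are neither contained in any element of \codestyle{ignored\_sets} nor contain any element of \codestyle{ignored\_atoms} are exactly the faces of $\Sigma$ that contain no vertex of $V$. Applying \Cref{thm:algorithm} then yields that \textbf{FaceIterator} visits each of these faces exactly once, which is the claim.

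The main obstacle is really just the bookkeeping surrounding the embedding $\Sigma \hookrightarrow M$: one has to make sure that the \codestyle{coatoms} fed to the algorithm are those of $\Sigma$ (not of $M$), while \codestyle{ignored\_sets} accounts precisely for the facets living in $M \setminus \Sigma$, so that the algorithm effectively walks through the face lattice of $M$ but only outputs faces of $\Sigma$. Once this translation is in place, the proof is a direct invocation of \Cref{thm:algorithm} together with the fact, established in \Cref{ex:manifold,ex:tight_spans}, that the ambient face lattice is locally branched.
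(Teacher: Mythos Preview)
Your overall strategy---embed $\Sigma$ into a closed manifold $M$ as in \Cref{ex:tight_spans}, observe via \Cref{ex:manifold} that the face lattice of $M$ (with an added $\hat 1$) is locally branched, and then invoke \Cref{thm:algorithm}---is exactly the intended route. The gap is in your ``key combinatorial observation''. You assert that a face $p$ of $M$ is a face of $\Sigma$ if and only if $p$ is not contained in any facet of $M\setminus\Sigma$, and your justification (``any face of $M$ lies in some facet of $M$'') indeed yields the implication $\Leftarrow$. The implication $\Rightarrow$, however, fails: every boundary face $F$ of $\Sigma$ is a bona fide face of $\Sigma$, yet $F \subseteq F\cup\{\infty\}$, and the sets $F\cup\{\infty\}$ for maximal boundary faces $F$ are precisely the facets of $M\setminus\Sigma$. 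With your choice of \codestyle{ignored\_sets} the iterator therefore skips every boundary face of $\Sigma$, even when it contains no vertex of $V$; already for $V=\emptyset$ you would not visit all faces of $\Sigma$.

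The repair is to detect membership in $\Sigma$ via the added atom rather than via the added coatoms: a face $p$ of $M$ lies in $\Sigma$ if and only if $\infty\notin p$. So take \codestyle{coatoms} to be \emph{all} facets of $M$---the facets of $\Sigma$ together with the cells $F\cup\{\infty\}$ for $F$ a maximal boundary face---and set \codestyle{ignored\_atoms} $=V\cup\{\infty\}$ with \codestyle{ignored\_sets} empty. \Cref{thm:algorithm} then outputs exactly the faces of $M$ avoiding $\infty$ and every vertex of $V$, which are precisely the faces of $\Sigma$ not containing any vertex of $V$. This also explains why the hypothesis asks for the boundary faces: they are needed to build the full coatom list of $M$, not to populate \codestyle{ignored\_sets}.
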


\begin{corollary}
\label{cor:algorithm_polyhedral_complex}
  Let~$P$ be a polyhedral complex.
  Provided each maximal cell as vertex-facet incidences with possibly marked face at infinity.
  The algorithm can be applied to visit exactly once each element in~$P$.
\end{corollary}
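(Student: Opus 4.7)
The plan is to combine the polyhedron case (Corollary \ref{cor:algorithm_polyhedra}) with the union construction of \Cref{ex:unions_of_lattices}. Fix an ordering $Q_1,\dots,Q_k$ of the maximal cells of~$P$. Each face lattice $\P_i$ of $Q_i$ (after the projective augmentation from \Cref{ex:polyhedron}, with the face at infinity recorded) is a locally branched meet semi-sublattice of some boolean lattice on the vertices of $Q_i$, and the face poset of~$P$ is set-theoretically the union $\P_1 \cup \dots \cup \P_k$ inside the boolean lattice on all vertices of $P$.

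First, I would handle the iteration over $\P_1$ by a direct application of \Cref{cor:algorithm_polyhedra}, which visits each non-empty face of $Q_1$ exactly once. For $i \geq 2$, I would apply the iterator to $\P_i$ with \codestyle{ignored\_sets} initialised to contain the vertex sets of the intersections $Q_1 \cap Q_i,\dots,Q_{i-1} \cap Q_i$, together with the marked face at infinity of $Q_i$ if $Q_i$ is unbounded. The defining property of a polyhedral complex says that each $Q_j \cap Q_i$ is a common face of $Q_j$ and $Q_i$, hence corresponds to an element of $\P_i$; a face $F$ of $Q_i$ already belongs to some earlier $Q_j$ if and only if $F \subseteq Q_j \cap Q_i$, i.e.\ if and only if $\At(F)$ is contained in one of the ignored sets.

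By \Cref{thm:algorithm}, the $i$-th call therefore yields exactly those faces of $Q_i$ that are not faces of any $Q_j$ with $j < i$. Concatenating the outputs of the $k$ calls produces every face of~$P$ exactly once, which is what the corollary asserts. If one also wishes to visit the empty face, one can output it once at the outset.

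The step requiring the most care is the justification that the \codestyle{ignored\_sets} passed into the $i$-th call really capture precisely the already-visited faces: this rests on the polyhedral complex axiom that $Q_j \cap Q_i$ is a common face, so that ``contained in a previously visited $Q_j$'' is equivalent to ``contained in the face $Q_j \cap Q_i$ of $\P_i$''. Once this equivalence is in place, the proof is essentially bookkeeping on top of \Cref{cor:algorithm_polyhedra} and the union scheme of \Cref{ex:unions_of_lattices}.
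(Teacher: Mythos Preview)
Your proof is correct and follows essentially the approach the paper intends: the corollary is stated without proof, deferring to \Cref{ex:unions_of_lattices}, \Cref{ex:polyhedral_complex}, and the worked polyhedral-complex example. The only cosmetic difference is that in the paper's worked example the full vertex sets of the earlier maximal cells $Q_1,\dots,Q_{i-1}$ are placed into \codestyle{ignored\_sets} rather than the intersections $Q_j\cap Q_i$; since every face of $Q_i$ has its atom set contained in $\At(Q_i)$, the two choices filter out exactly the same faces, and your version has the merit of making the polyhedral-complex axiom (that $Q_j\cap Q_i$ is a common face) visibly the reason the bookkeeping is correct.
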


\section{Data structures, memory usage, and theoretical runtime}
\label{sec:data_structures}

The operations used in the algorithm are \codestyle{intersetions}, \codestyle{subset checks} and \codestyle{unions}.
It will turn out that the crucial operation for the runtime is the subset check.

\medskip

For the theoretical runtime we consider representation as (sparse) \emph{sorted-lists-of-atoms}.
However, in the implementation we use (dense) \emph{atom-incidence-bit-vectors}.
This is theoretically slighly slower, but the crucial operations can all be done using bitwise operations.

\medskip

Observe that a \emph{sorted-lists-of-atoms} needs as much memory as there are incidences.
Consider two sets~$A$ and~$B$ (of integers) of lengths~$a$ and~$b$, respectively, and a (possibly unsorted) list~$C$ of $m$ sets of total length $\alpha$.
Using standard implementations, we assume in the runtime analysis that
\begin{itemize}
    \item finding (and possibly deleting) a given element~$x$ inside~$C$ has runtime~$\O(\alpha)$,
    \item deleting all duplicates in~$C$ has runtime~$\O(\alpha \cdot m)$,
    \item intersection $A \cap B$ and union $A \cup B$ have runtime $\O(\max(a,b))$,
    \item a subset check $A \subseteq B$ has runtime $\O(b)$ and
    \item to check whether $A$ is subset of any element in $C$ has runtime $\O(\alpha)$.
\end{itemize}
Let~$r+1$ be the number of elements in a longest chain in~$\P$, let~$m = |\text{\codestyle{coatoms}}|$, $n = |\text{\codestyle{atoms}}|$, and let
\[
  \alpha = \sum_{a \in \text{\codestyle{coatoms}}\cup \text{\codestyle{ignored\_sets}}} |a \cup\text{\codestyle{ignored\_atoms}}|.
\]
(In the case that \codestyle{ignored\_sets} and \codestyle{ignored\_atoms} are both empty, $\alpha$ is the total number of atom-coatom incidences.)
Moreover, let~$\varphi$ be the number of recursive calls of the algorithm.
(In the case that \codestyle{ignored\_atoms} is empty, $\varphi$ is the cardinality of $\P$.
Otherwise, it is bounded by this cardinality.)

\begin{theorem}\label{thm:runtime_algorithm}
    The algorithm has memory consumption $\O(\alpha \cdot r)$ and runtime $\O(\alpha \cdot m \cdot \varphi)$.
\end{theorem}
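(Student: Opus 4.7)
The plan is to analyse memory and runtime separately by following the recursion tree of \textbf{FaceIterator}, with $\alpha$, $m$, $n$, $r$, $\varphi$ always referring to the values of the initial input.

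For the memory bound, I would first show that the live recursion stack has depth at most $r$. Each nested descent happens in line~\ref{line:visit_below}, where \textbf{FaceIterator} is applied to the sublattice $[\hat 0, a]$ for a coatom $a$; the longest chain in this sublattice is at least one shorter than in the enclosing interval, so a path of nested active calls corresponds to a strictly descending chain in $\P$ and has length at most $r$. The companion call in line~\ref{line:visit_next} is invoked only after the subtree of line~\ref{line:visit_below} has fully returned, and is therefore best viewed as a tail call replacing the current frame rather than stacking on top of it. Each active frame stores \codestyle{coatoms}, \codestyle{ignored\_sets}, and \codestyle{ignored\_atoms}, whose combined sparse representation has size $O(\alpha)$, and multiplying gives the claimed $O(\alpha \cdot r)$ memory.

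For the runtime bound, I would show that the work local to one invocation (i.e.\ not charged to any subcall) is $O(\alpha \cdot m)$, and then multiply by $\varphi$. Line by line: picking $a$ together with the test $a \cap$ \codestyle{ignored\_atoms} $= \emptyset$ is $O(n)$; producing \codestyle{new\_coatoms} in line~\ref{line:new_coatoms1} requires $m-1$ intersections, costing $O(\alpha)$ in total; filtering the result by containment in an element of \codestyle{ignored\_sets} in line~\ref{line:new_coatoms2} amounts to $m$ queries of cost $O(\alpha)$ each, giving $O(\alpha \cdot m)$; extracting inclusion-maximals in line~\ref{line:inclusion_maximal} via pairwise subset checks is again $O(\alpha \cdot m)$; building \codestyle{next\_coatoms} in line~\ref{line:remove_some_coatoms} costs $O(\alpha)$; and the \textbf{append} in line~\ref{line:append_ignored_sets} is $O(n)$. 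The dominant term is $O(\alpha \cdot m)$, so summing over the $\varphi$ calls gives the runtime $O(\alpha \cdot m \cdot \varphi)$.

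The main obstacle, and the point that needs the most care, is that the local analogues of $\alpha$ and $m$ inside a recursive frame are not literally the initial values: the recursion in line~\ref{line:visit_below} replaces \codestyle{coatoms} by a family of intersections which may concentrate the incidence support, while the sibling iteration in line~\ref{line:visit_next} keeps appending to \codestyle{ignored\_sets} as one cycles through the coatoms at a given level. I would address this by verifying that the local $\alpha$ and $m$ at any frame are still $O(\alpha)$ and $O(m)$ in the initial parameters, respectively, possibly up to factors that are absorbed into the final product $\alpha \cdot m \cdot \varphi$. Once this uniformity of the per-call bound is in place, the two estimates combine in the obvious way to prove the theorem.
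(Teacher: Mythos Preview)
Your proposal is correct and follows essentially the same approach as the paper: bound the live stack depth by $r$ via the chain-length decrease along line~\ref{line:visit_below}, treat line~\ref{line:visit_next} as a tail call, and multiply the $O(\alpha\cdot m)$ per-call cost by $\varphi$. The invariant you single out as the main obstacle --- that the local $m$ and $\alpha$ never exceed their initial values --- is likewise just asserted in the paper's proof, and in fact holds exactly (no absorbed factors needed): in line~\ref{line:visit_next} removing $a$ from \codestyle{coatoms} is balanced by appending $a\cup\text{\codestyle{ignored\_atoms}}$ to \codestyle{ignored\_sets}, while in line~\ref{line:visit_below} each intersection $a\cap b$ is no longer than $b$.
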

\begin{proof}
    Note first that at each recursive call of \textbf{FaceIterator} the number of \codestyle{coatoms} is bounded by $m$ and the total length of \codestyle{coatoms}, \codestyle{ignored\_sets} and \codestyle{ignored\_atoms} is bounded each by~$\alpha$.
    With above assumptions, \textbf{FaceIterator} has runtime $\O(\alpha \cdot m)$ not considering recursive calls.

    A single call of \textbf{FaceIterator} has memory usage at most~$c \cdot \alpha$ for a global constant~$c$, not taking into account the recursive calls.
    The call in line~\ref{line:visit_next} does not need extra memory as all old variables can be discarded.
    The longest chain of the lattice $[0,\text{\codestyle{a}}]$ is at most of length $r-1$.
    By induction the call of \textbf{FaceIterator} in line~\ref{line:visit_below} has total memory consumption at most $(r-1) \cdot c \cdot \alpha$.
    The claimed bounds follow.
\end{proof}

\subsection{Computing all cover relations}
\label{sec:covers}

Applying the algorithm to a graded locally branched meet semi-sublattice of $B_n$ while keeping track of the recursion depth allows an a posteriori sorting of the output by the level sets of the grading.
The recursion depth is the number of iterative calls using line~\ref{line:visit_below}.
We obtain the same bound for generating all cover relations as V.~Kaibel and M.~E.~Pfetsch~\cite{Kaibel2002}.
For this we additionally assume that
\begin{itemize}
  \item a list of $\varphi$ sets each of length at most $n$ can be sorted in time $\O(n \cdot \varphi \cdot \log \varphi)$ and
  \item a set with $a$ elements can be looked up in a sorted list of $\varphi$ sets in time $\O(a \cdot \log \varphi)$.
\end{itemize}
\begin{proposition}\label{prop:all_cover_relations}
  Let $\P$ be a graded meet sublattice of $\B_n$.
  Assume each level set of $\P$~to be given as \emph{sorted-lists-of-atoms}, one can generate all cover relations in time $\O(\alpha \cdot \min(m,n) \cdot \varphi)$ with quantities as defined above using the above algorithm.
\end{proposition}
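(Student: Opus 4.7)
The crux is the observation that, inside \textbf{FaceIterator}, the list \codestyle{new\_coatoms} assembled in lines~\ref{line:new_coatoms1}--\ref{line:new_coatoms3} is exactly the set of lower covers of the element $a$ picked in line~\ref{line:pick_a}; this was already established in the correctness proof. Hence the cover relations are produced as a by-product of the iteration, and the only task left is to match each subset appearing in \codestyle{new\_coatoms} with the corresponding element of $\P$ in the appropriate level set.

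My plan has three steps. First, augment \textbf{FaceIterator} with a counter equal to the current recursion depth reached through line~\ref{line:visit_below}. Since $\P$ is graded, this counter coincides with $r(\hat 1) - r(a)$ at the moment $a$ is output, so the visited elements are automatically partitioned into level sets $L_0, \dots, L_r$. Alongside the output of $a$, I record the list \codestyle{new\_coatoms} computed at that call. Second, sort each $L_k$ lexicographically as sorted-lists-of-atoms; by the assumed sorting primitive, the aggregate cost over all levels is $\O(n \cdot \varphi \cdot \log \varphi)$. Third, for each visited $a$ at level $k$ and each $p \in \codestyle{new\_coatoms}$, binary-search $p$ inside $L_{k-1}$ to identify it as an element of $\P$, and emit the pair $(p,a)$ as a cover relation.

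For the claimed bound, the iteration itself costs $\O(\alpha \cdot m \cdot \varphi)$ by \Cref{thm:runtime_algorithm}; invoking \Cref{cor:algorithm_opposite} to run the same procedure on $\Pop$ (whose coatoms are the original atoms) gives the symmetric bound $\O(\alpha \cdot n \cdot \varphi)$, and choosing whichever of $\P,\Pop$ has fewer coatoms yields the $\min(m,n)$ factor. The sorting step contributes $\O(n \cdot \varphi \cdot \log \varphi)$, and each lookup in step~three costs $\O(|\At(p)| \cdot \log \varphi)$, so the total lookup cost is bounded by $\log \varphi$ times the total number of atom-incidences appearing in all recorded \codestyle{new\_coatoms}.

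The main obstacle is to audit this last quantity carefully and show it lies inside $\O(\alpha \cdot \min(m,n) \cdot \varphi)$. I would bound the number of lower covers of a fixed $q$ by the number of coatoms of the interval $[\hat 0, q]$ (a locally branched lattice by \Cref{prop:locally_branched_atomic_coatomic}), which after applying the $\P$-versus-$\Pop$ choice is bounded by $\min(m,n)$; summing the incidences of these lower covers against the atom budget already accounted for in each recursive call of \textbf{FaceIterator} absorbs the extra $\log \varphi$ factor into the $\alpha$-bookkeeping already used in the proof of \Cref{thm:runtime_algorithm}, and delivers the stated runtime.
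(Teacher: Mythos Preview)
Your proposal has a genuine gap in its opening sentence. The correctness proof does \emph{not} establish that \codestyle{new\_coatoms} consists of all lower covers of~$a$; it establishes (see the proof of the well-definedness proposition) that \codestyle{new\_coatoms} consists of the lower covers of~$a$ \emph{not contained in any element of} \codestyle{ignored\_sets}. As soon as \codestyle{ignored\_sets} is nonempty---which happens from the second top-level iteration onward and in essentially every deeper recursive call---lower covers are filtered out in line~\ref{line:new_coatoms2}. In the paper's square example, when $a=\{1,4\}$ is visited one has \codestyle{ignored\_sets}${}=[\{1,2\}]$ and hence \codestyle{new\_coatoms}${}=[\{4\}]$; your procedure never emits the cover relation $\{1\}\prec\{1,4\}$. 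Already inside the first recursive branch, the cover $\emptyset\prec\{2\}$ is lost for the same reason. Since each element of~$\P$ is output exactly once, there is no second pass in which these missing covers could be recovered, so the scheme does not compute all cover relations.

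The paper sidesteps this by \emph{not} reusing the intermediate \codestyle{new\_coatoms} lists. After the iterator has produced all elements partitioned into level sets, it intersects every element~$p$ afresh with each of the~$m$ coatoms of~$\P$ and looks up each intersection in the next-lower level set; because~$\P$ is locally branched, all lower covers of~$p$ arise this way. The runtime bound then comes from the inequality $\log\varphi\le\min(m,n)$ (since $\varphi\le 2^{\min(m,n)}$) together with $m,n\le\alpha$, which absorbs the sorting, intersection, and lookup costs into $\O(\alpha\cdot\min(m,n)\cdot\varphi)$---no dual run on~$\Pop$ is needed for the $\min(m,n)$ factor.
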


Observe that in the situation of this proposition, \codestyle{ignored\_sets} and \codestyle{ignored\_atoms} are both empty and in particular~$\alpha$ is the total length of the coatoms.

\begin{proof}
  First, we sort all level-sets.
  Then, we intersect each element with each coatom, obtaining its lower covers and possibly other elements.
  We look up each intersection to determine the lower covers.

  Sorting the level sets is done in time $\O(n \cdot \varphi \cdot \log \varphi)$.
  All such intersection are obtained in time $\O(\varphi \cdot m \cdot n)$.
  For a fixed element the total length of its intersections with all coatoms is bounded by $\alpha$.
  Hence, all lookups are done in time $\O(\varphi \cdot \alpha \cdot \log \varphi)$.

  Finally, we note that $m,n \leq \alpha$ and that $\log \varphi \leq \min(m,n)$.
\end{proof}

In the ungraded case, one first sorts all elements in $\P$, and then intersects each element~$p$ with all coatoms.
The inclusion maximal elements among those strictly below~$p$ are lower covers of~$p$.
They can be looked up in the list of sorted elements to obtain an index.
Observe that all this is done time~$\O(\alpha \cdot m \cdot \varphi)$.

\section{Performance of the algorithm implemented in \sage}
\label{sec:performance}

We present running times for the several computations.
These are performed on an Intel$^\text{\tiny{\textregistered}}$ Core\texttrademark{} i7-7700 CPU @ 3.60GHz x86\_64-processor with 4 cores and 30 GB of RAM.
The computations are done either using
\begin{itemize}
    \item \polymake~\texttt{3.3}~\cite{polymake:2000}, or
    \item \normaliz~\texttt{3.7.2}~\cite{Normaliz}, or
    \item the presented algorithm in \sageversion, or
    \item the presented algorithm in \sageversion\ with additional parallelization,
        intrinsics, and subsequent improvements\footnote{
            The presented algorithm can be parallelized easily by altering the loop call in line~\ref{line:loop} on page \pageref{line:loop} in \Cref{sec:algorithm}.
            The implementation using bitwise operations also allows to use instructions for intrinsics such as \emph{Advanced Vector Extensions}.
            This results in a runtime improvement of at least a factor~$2$.
            Also, the subset check has been improved since \sageversion.
            All these improvements will be made available in \sage{}, see \url{https://trac.sagemath.org/ticket/28893} and \url{https://trac.sagemath.org/ticket/27103}.
        }
\end{itemize}
The default algorithms in \sage\ before version \sageversion\ performs much worse than either of these and is not considered here.
Given the vertex-facet incidences, we computed
\begin{figure}[htbp]
    \begin{tikzpicture}%
    \begin{loglogaxis}[
      axis x line=bottom,
      axis y line=left,
      axis equal image,
      xmin=0.01,
      xmax=1000000,
      ymin=0.01,
      ymax=1000000,
      xlabel={\small\polymake},
      ylabel={\tiny cover relations, {\color{red}$f$-vector}, {\color{blue}improved $f$-vector}}
      ]
      ]
      \addplot[only marks, mark options={scale=0.5}] table[x=polymake,y=cover,col sep=comma] {data.csv};
      \addplot[only marks, color=red, mark options={scale=0.5}] table[x=polymake,y=new,col sep=comma] {data.csv};
      \addplot[only marks, color=blue, mark options={scale=0.5}] table[x=polymake,y=newparallel,col sep=comma] {data.csv};
      \addplot [dashed,color=gray,mark=none,domain=0.01:1000000] {x};
      \addplot [dashed,color=gray,mark=none,domain=0.01:1000000] {0.1*x};
      \addplot [dashed,color=gray,mark=none,domain=0.01:1000000] {0.01*x};
      \addplot [dashed,color=gray,mark=none,domain=0.01:1000000] {0.001*x};
      \addplot [dashed,color=gray,mark=none,domain=0.01:1000000] {0.0001*x};
      \addplot [dashed,color=gray,mark=none,domain=0.01:1000000] {0.00001*x};
    \end{loglogaxis}
  \end{tikzpicture}
  \hfill
  \begin{tikzpicture}%
    \begin{loglogaxis}[
      axis x line=bottom,
      axis y line=left,
      axis equal image,
      xmin=0.01,
      xmax=1000000,
      ymin=0.01,
      ymax=1000000,
      xlabel={\small\normaliz},
      ylabel={\tiny{\color{red}$f$-vector}, {\color{blue}improved $f$-vector}}
      ]
      ]
      \addplot[only marks, color=red, mark options={scale=0.5}] table[x=normaliz,y=new,col sep=comma] {data.csv};
      \addplot[only marks, color=blue, mark options={scale=0.5}] table[x=normaliz,y=newparallel,col sep=comma] {data.csv};
      \addplot [dashed,color=gray,mark=none,domain=0.01:1000000] {x};
      \addplot [dashed,color=gray,mark=none,domain=0.01:1000000] {0.1*x};
      \addplot [dashed,color=gray,mark=none,domain=0.01:1000000] {0.01*x};
      \addplot [dashed,color=gray,mark=none,domain=0.01:1000000] {0.001*x};
      \addplot [dashed,color=gray,mark=none,domain=0.01:1000000] {0.0001*x};
      \addplot [dashed,color=gray,mark=none,domain=0.01:1000000] {0.00001*x};
    \end{loglogaxis}
  \end{tikzpicture}

  \caption{Comparision of the runtimes.  Every dot represents one best-of-five computation, and every shifted diagonal is a factor-$10$ faster runtime. Dots at the boundary represent memory overflows.}
\end{figure}
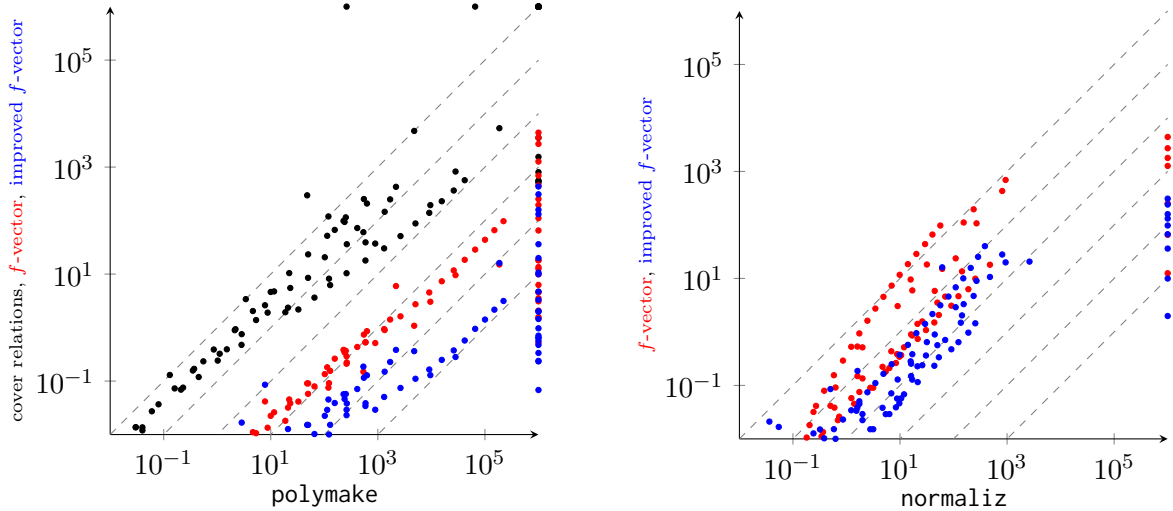

\setlist[enumerate,1]{label={(\arabic*)}}
\begin{enumerate}
  \item\label{it:comp1} cover relations and $f$-vector in \polymake,
  \item\label{it:comp6} $f$-vector in \normaliz\ \emph{with} parallelization,
  \item\label{it:comp4} $f$-vector with the presented implementation in \sage,
  \item\label{it:comp3} all cover relations with the presented implementation in \sage,
  \item\label{it:comp5} $f$-vector with the presented implementation in \sage\ \emph{with} parallelization, intrinsics and additional improvements.
\end{enumerate}
We remark that
\begin{itemize}
  \item the computation of the $f$-vector in~\ref{it:comp1} also calculates all cover relations,
  \item \polymake\ also provides a different algorithm to compute the $f$-vector from the $h$-vector for simplicial/simple polytopes (providing this additional information sometimes improves the performance in \polymake), and
  \item \normaliz\ does not provide an algorithm to compute the cover relations.
\end{itemize}
For every algorithm we record the best-of-five computation on
\begin{itemize}
    \item the simplex of dimension~$n$,
    \item several instances of the cyclic polyhedron of dimension $10$ and $20$,
    \item the associahedron of dimension~$n$,
    \item the permutahedron of dimension~$n$ embedded in dimension $n+1$,
    \item a $20$-dimensional counterexample to the Hirsch-conjecture,
    \item the cross-polytope of dimension~$n$,
    \item the Birkhoff-polytope of dimension $(n-1)^2$,
    \item joins of such polytopes with their duals,
    \item Lawrence polytopes of such polytopes,
\end{itemize}
and refer to \Cref{sec:runtimes} for the detailed runtimes.

\section{Application of the algorithm to Wilf's conjecture}
\label{sec:wilf}

W.~Bruns, P.~García-Sánchez, C.~O'Neill and D.~Wilburne provided an algorithm that verifies Wilf's conjecture for a given fixed multiplicity~\cite{Bruns}.
We give a brief overview of their approach:

\begin{definition}
  A \defn{numerical semigroup} is a set $S \subset \Z_{\geq{0}}$ containing $0$ that is closed under addition and has finite complement.
  \begin{itemize}
      \item Its \defn{conductor} $c(S)$ is the smallest integer $c$ such that $c + \Z_{\geq 0} \subseteq S$.
      \item Its \defn{sporadic elements} are the elements $a \in S$ with $a < c(S)$ and let $n(S)$ be the number of sporadic elements.
      \item The \defn{embedding dimension} $e(S) = |S \setminus (S+S)|$ is the number of elements that cannot be written as sum of two elements.
      \item The \defn{multiplicity} $m(S)$ is the minimal nonzero element in~$S$.
  \end{itemize}
\end{definition}

\begin{conjecture}[Wilf]
    For any numerical semigroup $S$,
    \[ c(S) \leq e(S)n(S).\]
\end{conjecture}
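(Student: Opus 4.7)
The final statement is Wilf's conjecture itself, which is a well-known open problem in combinatorial semigroup theory, so any sketch here is necessarily provisional. The plan is to push the face-enumeration strategy of Bruns--García-Sánchez--O'Neill--Wilburne, on which the present paper is built, as far as possible and combine it with the known structural partial results.

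First, I would set up the Kunz-cone framework. For a fixed multiplicity $m$, numerical semigroups of multiplicity $m$ are in bijection with the lattice points in a rational polyhedron $K_m\subset\R^{m-1}$, and on the relative interior of each face of $K_m$ the invariants $c(S)$, $e(S)$, $n(S)$ are given by uniform piecewise-linear expressions in the coordinates. Wilf's inequality $c(S)\leq e(S)n(S)$ therefore reduces, for each $m$, to a finite check over the face poset of $K_m$: one calls a face \emph{bad} if its defining inequalities do not immediately force $c\leq e\cdot n$, and it suffices to show that no bad face produces a genuine counterexample. The face iterator of this paper executes this check up to $m=19$; the goal for the general conjecture is to understand the bad faces of $K_m$ uniformly in $m$.

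Second, I would invoke known reductions to cut down the search space. Wilf's conjecture is established in several regimes — including small embedding dimension, small depth $c(S)/m(S)$, and the case $2n(S)>c(S)$ — together ruling out vast strata of $K_m$. I would try to show that every bad face of $K_m$ lies in one of these resolved regimes, perhaps by a dimension or generator-count argument on the face itself, which would reduce the conjecture to a recognizable structural statement about which faces of $K_m$ can be bad.

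The hard part — and the reason Wilf's conjecture is still open — is that no known structural theorem controls the bad faces of $K_m$ uniformly in the multiplicity. The face count of $K_m$ grows faster than any feasible enumeration, so a proof cannot be purely computational. A general argument probably requires either a new semigroup-theoretic inequality that dominates $c\leq e\cdot n$ directly from the Apéry set, or an inductive relationship between $K_m$ and $K_{m-1}$ (perhaps via projection or contraction of a minimal coordinate) that propagates the Wilf bound from smaller multiplicities. I view the construction of such a propagation mechanism as the true crux of the project; everything else in the plan is the existing machinery, and any concrete progress on the uniform control of bad faces would already constitute the main technical contribution.
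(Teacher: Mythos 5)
The statement you were asked to prove is labeled as a conjecture in the paper, and the paper does not prove it: Wilf's conjecture is an open problem, and the paper's contribution is only a computational verification of the case $m = 19$ via the face iterator applied to the Kunz cone $C_m$, using the bad-face criteria of Bruns--Garc\'ia-S\'anchez--O'Neill--Wilburne and Eliahou's reduction to orbits with $3e(F) < m$. Your proposal correctly identifies this, and your first two paragraphs describe essentially the same fixed-multiplicity strategy the paper implements; your honest admission that no uniform-in-$m$ control of the bad faces is known is accurate and is precisely why neither your sketch nor the paper constitutes a proof of the general statement.
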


For fixed mulitplicity $m$ one can analyse certain polyhedra to verify this conjecture.

\begin{definition}[{\cite[Def.~3.3]{Bruns}}]
    Fix an integer $m\geq 3$.
    The \defn{relaxed Kunz polyhedron} is the set $P'_m$ of rational points $(x_1,\dots,x_{m-1}) \in \R^{m-1}$ satisfying
    \begin{alignat*}{3}
        x_i + x_j \geq x_{i+j} \qquad 1 \leq i \leq j \leq m-1, \quad i +j < m,\\
        x_i + x_j + 1\geq x_{i+j} \qquad 1 \leq i \leq j \leq m-1,\quad  i +j > m,
    \end{alignat*}
    The \defn{Kunz cone} is the set $C_m$ of points $(x_1,\dots,x_{m-1}) \in \R^{m-1}$ satisfying
    \[
      x_i + x_j \geq x_{i+j} \qquad 1 \leq i \leq j \leq m-1, \quad i +j \neq m.
    \]
    (All indices in this definition are taken modulo~$m$.)
\end{definition}

\begin{definition}
    Let $F$ be a face of $P'_m$ or $C_m$.
    Denote by $e(F) - 1$ and $t(F)$ the number of variables not appearing on the right and left hand sides resp.~of any defining equations of $F$.
\end{definition}

The Kunz cone is a translation of the relaxed Kunz polyhedron.
$e(F)$ and $t(F)$ are invariants of this translation.

Every numerical semigroup $S$ of multiplicty $m$ corresponds to a (all-)positive lattice point in $P'_m$.
If the point corresponding to $S$ lies in the interior of some face $F \subseteq P'_m$, then $e(F) = e(S)$ and $t(F) = t(S)$, see \cite[Thm.~3.10 \& Cor.~3.11]{Bruns}.
The following proposition summarizes the approach by which we can check for bad faces:

\begin{proposition}[\cite{Bruns}]\label{prop:checkbad}
  There exists a numerical semigroup $S$ with multiplicity $m$ that violates Wilf's conjecture if and only if
  there exists a face $F$ of $P'_m$ with positive integer point $(x_1,\dots, x_{m-1}) \in F^\circ$ and $f \in [1,m-1]$ such that
  \[
    mx_i +i \leq mx_f + f \quad \text{for every $i \neq f$}
  \]
  and
  \[
    mx_f + f - m + 1 > e(F)\cdot\big(mx_f + f - m - (x_1 + \dots, + x_{m-1}) + 1\big).
  \]
\end{proposition}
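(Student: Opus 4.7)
The plan is to prove the equivalence by translating each quantity appearing in Wilf's conjecture into data attached to a face $F$ of $P'_m$ via the standard Apéry set parametrization. The key correspondence is the following classical bijection: every numerical semigroup $S$ of multiplicity $m$ is determined by its Apéry set $\operatorname{Ap}(S,m) = \{0, w_1,\dots,w_{m-1}\}$, where $w_i$ is the smallest element of $S$ congruent to $i$ modulo $m$, and these Apéry elements correspond to an all-positive integer point of $P'_m$ via $w_i = m x_i + i$ (the defining inequalities of $P'_m$ express exactly that $\{0,w_1,\dots,w_{m-1}\}$ is closed under the operation $w_i \oplus w_j = w_{(i+j)\bmod m}$ needed for $S$ to be a semigroup). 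Conversely, every all-positive integer point in $P'_m$ comes from a unique such $S$.

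Next I would rewrite each of $c(S)$, $n(S)$, $e(S)$ in terms of the coordinates $(x_1,\dots,x_{m-1})$. For the conductor, the Frobenius number is $F(S) = \max_i w_i - m$, so
\[
c(S) \;=\; \max_{1 \leq i \leq m-1}(m x_i + i) - m + 1.
\]
Choosing $f$ to be the (uniquely determined) index where this maximum is attained is precisely the first stated inequality $m x_i + i \leq m x_f + f$ for $i \neq f$, and gives $c(S) = m x_f + f - m + 1$. For the number of sporadic elements, I would use the identity $n(S) + g(S) = c(S)$ together with the Selmer-type formula $g(S) = \sum_{i=1}^{m-1}\lfloor w_i/m\rfloor = \sum_{i=1}^{m-1} x_i$, yielding
\[
n(S) \;=\; m x_f + f - m + 1 - (x_1 + \dots + x_{m-1}).
\]
Finally, $e(S) = e(F)$ by the cited \cite[Thm.~3.10 \& Cor.~3.11]{Bruns}, provided the lattice point $(x_1,\dots,x_{m-1})$ lies in the relative interior $F^\circ$ of $F$.

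With these translations in hand both directions are immediate. For the only-if direction, given a semigroup $S$ violating Wilf's conjecture $c(S) > e(S) n(S)$, take the corresponding lattice point $(x_1,\dots,x_{m-1}) \in P'_m$, let $F$ be the unique face of $P'_m$ containing it in its relative interior, and let $f$ index the maximum of the Apéry set; the two stated conditions then hold by the formulas above. For the if direction, a positive integer point in $F^\circ$ defines a numerical semigroup $S$ of multiplicity $m$ via $w_i = m x_i + i$, the first condition identifies $f$ as the argmax of $w_i$, and substituting the closed-form expressions for $c(S)$, $n(S)$, $e(S)=e(F)$ turns the second condition into $c(S) > e(S) n(S)$.

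The main obstacle is the bookkeeping of ensuring that the parametrization is compatible with the face structure in both directions: one must verify that the face containing the semigroup's point in its relative interior is indeed a face of $P'_m$ (not merely of the ambient $\R^{m-1}$), and that $e(F)$ computed from the face coincides with the semigroup-theoretic $e(S)$ — both delivered cleanly by \cite[Thm.~3.10 \& Cor.~3.11]{Bruns}. Once these are in place, the arithmetic rearrangement $c(S) - e(S) n(S) > 0$ collapses to the displayed inequality of the proposition purely mechanically.
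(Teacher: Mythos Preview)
Your argument is correct and is exactly the standard translation via Kunz coordinates: the Apéry parametrization $w_i = m x_i + i$ gives $c(S) = m x_f + f - m + 1$, Selmer's formula gives $g(S) = \sum_i x_i$ and hence $n(S) = c(S) - g(S)$, and the identification $e(S) = e(F)$ is precisely \cite[Thm.~3.10 \& Cor.~3.11]{Bruns}. Substituting these into $c(S) > e(S)\,n(S)$ yields the displayed inequality, and the converse direction unwinds the same dictionary.

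There is nothing to compare against in the present paper, however: the proposition is stated here with attribution to \cite{Bruns} and no proof is given. Your write-up is essentially a reconstruction of the argument in that reference, and it is sound. One minor point of polish: you might make explicit why the maximizing index $f$ is unique (the $w_i$ lie in pairwise distinct residue classes modulo $m$, hence are pairwise distinct integers), since the proposition's first condition is phrased as an existential over $f$ rather than as ``let $f$ be the argmax''.
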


A face $F$ of $P'_m$ is \defn{Wilf} if no interior point corresponds to a violation of Wilf's conjecture.
A face $F$ of $C_m$ is \defn{Wilf}, if the corresponding face in $P'_m$ is Wilf.

\begin{proposition}[\cite{Bruns}~p.9]\label{prop:badfaces}
    Let $F$ be a face of $P'_m$ or $C_m$.
    \begin{itemize}
        \item If $e(F) > t(F)$, then $F$ is Wilf.
        \item If $2e(F) \geq m$, then $F$ is Wilf.
    \end{itemize}
\end{proposition}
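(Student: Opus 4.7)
The plan is to translate the polyhedral invariants $e(F)$ and $t(F)$ into equivalent semigroup-theoretic invariants of any numerical semigroup $S$ whose integer point lies in the relative interior of $F$, and then invoke two classical sufficient conditions for Wilf's conjecture.

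First I would set up these identifications. If $(x_1,\dots,x_{m-1}) \in F^\circ$ is an interior integer point corresponding to $S$, then $w_i := mx_i + i$ are the nontrivial Apéry elements of $S$ modulo $m$, and every equation of $F$ of the form $x_i + x_j = x_{i+j}$ (with the appropriate $+1$ correction in the second family of Kunz relations) encodes the identity $w_i + w_j = w_{i+j}$ in $S$. Consequently the variables $x_k$ appearing on no right-hand side correspond bijectively to the minimal generators of $S$ distinct from $m$, so that $e(F) = e(S)$; this is essentially \cite[Thm.~3.10 \& Cor.~3.11]{Bruns}. Dually, $x_k$ is absent from every left-hand side iff $w_k$ is maximal in the Apéry poset of $S$ under divisibility in $S$, and such maximal Apéry elements biject with the pseudo-Frobenius numbers of $S$ via $w \mapsto w - m$, giving $t(F) = t(S)$.

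With these identifications in hand, the two implications reduce to known special cases of Wilf's conjecture: $2e(S) \geq m(S)$ is Sammartano's theorem, which settles the second bullet; and $e(S) > t(S)$ follows from an Eliahou-type counting estimate comparing the conductor to the number of sporadic elements via the type, which settles the first. In either case every integer point of $F^\circ$ then violates the necessary condition of \Cref{prop:checkbad}, and so $F$ is Wilf.

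The main obstacle will be the uniformity of the identifications over $F^\circ$: one has to argue that a single pair $(e(F), t(F))$ governs \emph{every} integer interior point of $F$ rather than merely a generic one. This forces a combinatorial verification that the set of equations active on $F$ coincides with the common set of Apéry identities shared by all such $S$; for $e(F)$ this is already set up in \cite{Bruns}, while for $t(F)$ the parallel argument must be carried out with care, handling both index families $i+j < m$ and $i+j > m$ and the ordering convention $i \leq j$ in the Kunz relations, and checking that no left-hand occurrence of $x_k$ can be hidden by the $+1$ correction term.
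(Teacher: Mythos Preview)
The paper does not give its own proof of this proposition: it is simply quoted from \cite{Bruns} (as the attribution in the statement indicates) and used as a black box to filter faces before the expensive check of \Cref{prop:checkbad}. So there is no argument in the paper to compare your proposal against.

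That said, your outline is essentially the correct one and matches what is actually done in \cite{Bruns}. The identifications $e(F)=e(S)$ and $t(F)=t(S)$ for any $S$ with Kunz point in $F^\circ$ are exactly the content of \cite[Thm.~3.10 \& Cor.~3.11]{Bruns}, and they are genuinely uniform over $F^\circ$ because the active equations of a face are the same for every relative-interior point; the ``obstacle'' you flag is therefore not a real one. With those identifications, the second bullet is indeed Sammartano's theorem ($2e(S)\geq m(S)\Rightarrow$ Wilf). For the first bullet your attribution is slightly off: the relevant inequality is the classical Fr\"oberg--Gottlieb--H\"aggkvist bound $c(S)\leq (t(S)+1)\,n(S)$, from which $e(S)>t(S)$, i.e.\ $e(S)\geq t(S)+1$, immediately gives $c(S)\leq e(S)\,n(S)$. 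Calling this an ``Eliahou-type counting estimate'' is misleading; Eliahou's results (such as the $3e\geq m$ theorem quoted later in the paper) are considerably deeper and not what is needed here.
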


Checking Wilf's conjecture for fixed multiplicity $m$ can be done as follows:
\begin{enumerate}
    \item For each face $F$ in $C_m$ check if \Cref{prop:badfaces} holds.
    \item If \Cref{prop:badfaces} does not hold, check with \Cref{prop:checkbad} if the translated face in $P'_m$ contains a point corresponding to a counterexample of the Wilf's conjecture.
\end{enumerate}
We say that a face $F$ of $C_m$ is \defn{bad} if \Cref{prop:badfaces} does not hold.
The group of units $(\Z/m\Z)^\times$ acts on $\R^{m-1}$ by multiplying indices.
The advantage of the Kunz cone over the (relaxed) Kunz polyhedron is that it is symmetric with respect to this action.
Even more, $e(F)$ and $t(F)$ are invariant under this action.
Thus in order to determine the bad faces, it suffices to determine for one representative of its orbit, if it is bad.
We say that an orbit is \defn{bad}, if all its faces are bad.

\medskip

While \cite{Bruns} uses a modified algorithm of \normaliz{} to determine all bad orbits, we replace this by the presented algorithm.
To use the symmetry of $C_m$, after visiting a facet $F$, we mark all facets in the orbit of $F$ as visited.

We visit of each orbit at least one face.
By sorting the facets by orbits and some lexicographic ordering of the facet-indices, we can even guarantee that we will visit the first element of each orbit.

Now, we are in a situation to apply the presented algorithm to Wilf's conjecture.
The concrete implementation is available in \sage{}\footnote{See \url{https://git.sagemath.org/sage.git/tree/?h=u/gh-kliem/KunzConeWriteBadFaces}.}.%
In the following table we compare the runtimes of computing the bad orbits:%

\begin{center}
  \begin{tabular}{r|r|c|c}
      m & \# bad orbits & \normaliz & \sage \\
      \hline
      15 & 180,464 & 3:33 m & 7 s \\
      16 & 399,380 & 54:39 m & 1:14 m \\
      17 & 3,186,147 & 19:35 h & 16:55 m \\
      18 & 17,345,725 & 27:13 d & 16:22 h \\
      19 & 100,904,233 & & 14:22 d
  \end{tabular}
\end{center}
These are performed on an Intel$^\text{\tiny{\textregistered}}$ Xeon\texttrademark{} CPU E7-4830 @ 2.20GHz with a total of 1 TB of RAM and 40 cores.
We used 40 threads and about 200 GB of RAM.
The timings in \cite{Bruns} used only 32 threads and a slightly slower machine.

\medskip

While testing all bad faces takes a significant amount of time, recent work by S.~Eliahou has simplified this task.

\begin{theorem}[{\cite[Thm.~1.1]{eliahou}}]
    Let $S$ be a numerical semigroup with multiplicity $m$.
    If $3e(S) \geq m$ then $S$ satisfies Wilf’s conjecture.
\end{theorem}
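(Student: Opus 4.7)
The strategy I would pursue is a direct combinatorial argument based on the Apéry set of $S$ with respect to its multiplicity. Let $\mathrm{Ap}(S,m) = \{w_0, w_1, \dots, w_{m-1}\}$, where $w_i$ is the smallest element of $S$ congruent to $i$ modulo $m$, so $w_0 = 0$ and $|\mathrm{Ap}(S,m)| = m$. The Frobenius number satisfies $c(S) - 1 = \max_i w_i - m$, and for each residue class $i \in \{1,\dots,m-1\}$ the number of sporadic elements in that class is $w_i / m$ when one partitions $[0, c(S))$ appropriately. In particular one gets the identity $n(S) = \sum_{i=1}^{m-1} \lceil \text{something} \rceil$, which one can rewrite as a weighted count over $\mathrm{Ap}(S,m)$. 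The minimal generators of $S$ different from $m$ form a subset $G \subseteq \mathrm{Ap}(S,m) \setminus \{0\}$ of cardinality $e(S) - 1$, characterised by the property that $w \in G$ cannot be written $w = w' + w''$ with $w', w'' \in \mathrm{Ap}(S,m) \setminus \{0\}$.

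Next, I would partition $\mathrm{Ap}(S,m) \setminus \{0\}$ by the value $\lfloor w_i / m \rfloor$ (the ``depth'' in residue class $i$), obtaining groups $A_k = \{w_i : \lfloor w_i / m \rfloor = k\}$ for $k \geq 1$. Every generator in $A_k$ forces, by closure of $S$ under addition and by the definition of the Apéry set, a structural constraint on the elements of $A_{k'}$ for $k' \geq k$. The crucial quantitative step, which is where the hypothesis $3e(S) \geq m$ must enter, is a three-term inequality that bounds the total number of Apéry elements (equivalently $\sum w_i$, and hence $c(S)$) in terms of $|G|$ and the depth structure. Roughly: each of the $e(S) - 1$ generators can ``cover'' at most three depth-slices worth of Apéry elements without forcing an additional generator or an additional sporadic element to appear. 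Summing this trade-off and combining it with the identity for $n(S)$ should yield the bound $c(S) \leq e(S)\, n(S)$ whenever $m \leq 3e(S)$.

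The main obstacle, as far as I can see, is exactly the third step: making the ``each generator covers three slices'' argument precise. For the previously known range $2e(S) \geq m$ one only needs a two-term exchange (essentially expressing $w_i = g_1 + g_2$ for generators $g_1, g_2 \in \mathrm{Ap}(S,m)$), and this is classical. Extending to three-term decompositions $w_i = g_1 + g_2 + g_3$ requires dealing with the possibility that such a decomposition does not stay within $\mathrm{Ap}(S,m)$; one must carefully track when carrying over between residue classes modulo $m$ happens, and show that these ``bad'' events are themselves controlled by the generator count. I would expect the proof to isolate a set of ``exceptional'' Apéry elements whose size is linear in $e(S)$, handle them separately, and apply the three-term covering argument to the complement.

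Finally, I would conclude by reassembling the estimates: the main inequality will express $c(S) = \max_i w_i - m + 1$ as a sum of per-residue contributions, and the per-residue contributions will be bounded above by $e(S)$ times the per-residue count of sporadic elements, giving $c(S) \leq e(S)\, n(S)$. A small boundary case (for instance $c(S) \leq m$, in which Wilf holds trivially) should be dispatched at the start to simplify the inductive accounting.
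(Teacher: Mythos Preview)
The paper does not prove this theorem at all: it is quoted verbatim from Eliahou's paper and used only as an input to reduce the computational verification for $m=19$ to the orbits with $3e<m$. There is therefore no ``paper's own proof'' against which to compare your proposal.

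Regarding the proposal itself: what you have written is an outline of intentions rather than a proof, and you correctly identify the point where it breaks down. The two-term case $2e(S)\ge m$ is indeed classical (Apéry elements decompose as sums of at most two generators inside $\mathrm{Ap}(S,m)$ up to controllable carries), but the passage to three terms is not a routine extension. Your ``each generator covers three slices'' heuristic does not translate into an inequality without a mechanism that controls how many Apéry elements can fail to have a three-term decomposition that stays inside the Apéry set, and nothing in your sketch supplies such a mechanism. Eliahou's actual argument does not proceed by a direct three-term covering; it goes through a reformulation in terms of the $O$-sequence constraints coming from Macaulay's theorem on Hilbert functions of standard graded algebras, which is what ultimately produces the needed numerical inequality. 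So the gap you flag in your third paragraph is not a detail to be filled in later but the entire substance of the theorem, and the tool that fills it is of a different nature than the elementary Apéry-set bookkeeping you propose.
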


Checking the remaining orbits can be done quickly (we used an Intel$^\text{\tiny{\textregistered}}$ Core\texttrademark{} i7-7700 CPU @ 3.60GHz x86\_64-processor with 4 cores).
For each of the orbits with $3e < m$, we have checked whether the corresponding region is empty analogously to the computation in~\cite{Bruns}:

\begin{center}
  \begin{tabular}{r|r|r}
    m & \# orbits & time\\
    \hline
    15 & 193        & 1 s   \\
    16 & 5,669      & 11 s   \\
    17 & 7,316      & 31 s    \\
    18 & 17,233     & 1:54 m    \\
    19 & 285,684    & 2:22 h \\
  \end{tabular}
\end{center}
This computation yields the following proposition.

\begin{proposition}
  Wilf's conjecture holds for $m = 19$.
\end{proposition}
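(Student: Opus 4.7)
The plan is to follow the two-stage computational strategy of Bruns--García-Sánchez--O'Neill--Wilburne as exposed in \Cref{prop:checkbad,prop:badfaces}, with the face enumeration replaced by the iterator of \Cref{sec:algorithm}. By \Cref{prop:checkbad}, it suffices to exhibit, for every face $F$ of the Kunz cone $C_{19}$, that no interior positive integer point of the corresponding face of $P'_{19}$ gives a Wilf counterexample. By \Cref{prop:badfaces} and Eliahou's theorem, we may restrict attention to those orbits of faces under the action of $(\Z/19\Z)^\times$ (which has order $18$) for which neither of the two Wilf-guaranteeing conditions holds and for which moreover $3e(F) < 19$, i.e.~$e(F) \leq 6$.

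First, I would run the face iterator on the coatoms (facets) of $C_{19}$, using the symmetry-pruning described in \Cref{sec:wilf}: sort the facets so that each $(\Z/19\Z)^\times$-orbit has a canonical first element, and after processing a facet mark the whole orbit as visited, so that exactly one representative of each orbit is enumerated. Since the invariants $e(F)$ and $t(F)$ are constant on orbits, this suffices for the bad-face analysis. Along the way I would record those orbits which are \emph{bad} in the sense of \Cref{sec:wilf}, namely those failing both conditions of \Cref{prop:badfaces}. According to the table in the paper this enumeration produces $100{,}904{,}233$ bad orbits for $m=19$.

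Second, I would apply Eliahou's theorem to discard every bad orbit with $3e(F) \geq 19$, leaving the $285{,}684$ orbits with $3e(F) < 19$. For each such orbit $F$, translating the face of $C_{19}$ to the corresponding face of $P'_{19}$, I would solve the feasibility problem given by \Cref{prop:checkbad}: for each admissible $f \in [1,18]$, check whether there exists an integer point $(x_1,\ldots,x_{18}) \in F^\circ$ with $x_i > 0$ for all $i$, $mx_i + i \leq mx_f + f$ for $i \neq f$, and the strict inequality
\[
  mx_f + f - m + 1 > e(F)\bigl(mx_f + f - m - (x_1+\cdots+x_{18}) + 1\bigr).
\]
This is done exactly as in~\cite{Bruns}: the region is an integer polyhedron of modest dimension, and emptiness can be certified by standard linear programming / integer feasibility over the cone $F$.

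Finally, having checked that none of the $285{,}684$ candidate orbits produces a Wilf counterexample, the statement $c(S) \leq e(S)n(S)$ holds for every numerical semigroup of multiplicity $19$. The main obstacle is not mathematical but computational: the iterator must traverse the entire face lattice of $C_{19}$ up to symmetry, which requires the efficient implementation and memory management described in \Cref{sec:data_structures} together with parallelization over $40$ threads, and the final feasibility checks must be organized so that they run in the times reported in the final table. \Cref{thm:runtime_algorithm} applied to $C_{19}$ gives the theoretical justification that the iteration is feasible, and correctness on each orbit follows from \Cref{thm:algorithm} together with \Cref{cor:algorithm_polyhedra}.
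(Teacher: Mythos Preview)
Your proposal is correct and follows essentially the same computational strategy as the paper: enumerate orbits of faces of $C_{19}$ under $(\Z/19\Z)^\times$ via the symmetry-pruned face iterator, record the $100{,}904{,}233$ bad orbits, discard those with $3e(F)\geq 19$ by Eliahou's theorem to leave $285{,}684$ orbits, and then verify via \Cref{prop:checkbad} that none of these yields a counterexample. The paper's own ``proof'' is nothing more than the sentence ``This computation yields the following proposition'' preceded by the two runtime tables, so you have in fact spelled out more of the argument than the paper does.
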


\appendix

\section{Detailed runtimes}
\label{sec:runtimes}

We give, for each of the five computations, an example of how it is executed for the $2$-simplex.
\medskip
\lstset{%
        linewidth   = 0.95\textwidth,
        xleftmargin = 0.05\textwidth,
        basicstyle  = \ttfamily,
        frame       = none,
        numbers     = none}

\ref{it:comp1} Compute cover relations and $f$-vector in \polymake\ from vertex-facet-incidences:

\begin{lstlisting}
polytope> new Polytope(VERTICES_IN_FACETS=>
                            [[0,1],[0,2],[1,2]])->F_VECTOR;
\end{lstlisting}

\ref{it:comp6} Compute $f$-vector with \normaliz
\begin{lstlisting}
sage: P = polytopes.simplex(2, backend='normaliz')
sage: P._nmz_result(P._normaliz_cone, 'FVector')
\end{lstlisting}

\ref{it:comp4} Compute cover relations in \sage{}:

\begin{lstlisting}
sage: C = CombinatorialPolyhedron([[0,1],[0,2],[1,2]])
sage: C._compute_face_lattice_incidences()
\end{lstlisting}

\ref{it:comp3} \& \ref{it:comp5} Compute $f$-vector in \sage{}
\begin{lstlisting}
sage: C = CombinatorialPolyhedron([[0,1],[0,2],[1,2]])
sage: C.f_vector()
\end{lstlisting}

For displaying the runtimes, we use the following notations:

\medskip

\begin{itemize}
\setlength{\itemsep}{8pt}
    \item $\Delta_d$ for the $d$-dimensional simplex,
    \item $\C_{d,n}$ for the $d$-dimensional cyclic polytope with $n$ vertices,
    \item $\A_d$ for the $d$-dimensional associahedron,
    \item $\P_d$ for the $d$-dimensional permutahedron,
    \item $\Hi$ for the $20$-dimensional counterexample to the Hirsch conjecture,
    \item $\square_d$ for the $d$-cube,
    \item $\B_n$ for the $(n-1)^2$-dimensional Birkhoff polytope,
    \item $P^\op$ for the polar dual of a polytope~$P$,
    \item $L(P)$ for the Lawrence polytope of~$P$.
\end{itemize}

\medskip

The runtimes of the five best-of-five computations on the various examples are as given in the following table.
``MOF'' indicates that the process was killed due to memory overflow, and a dash indicates a runtime of less than a second.

\clearpage

\begin{center}
\scalebox{0.85}{
\begin{tabular}{r||ccccc||r||ccccc}
                            & \multicolumn{5}{c||}{Time in s}                                               &                  & \multicolumn{5}{c}{Time in s}\\
  \hline
  \hline
  {}                       & \ref{it:comp1}    & \ref{it:comp6}    & \ref{it:comp4}    & \ref{it:comp3}  & \ref{it:comp5} & {}                & \ref{it:comp1}    & \ref{it:comp6}    & \ref{it:comp4}    & \ref{it:comp3} &\ref{it:comp5} \\
$\square_5 \star \square_5^\op$&   3.4&   ---&  3.4  &    ---&  ---&       $\Delta_{18}$& 1.0e1&   ---&  4.6  &   ---&   ---\\
$\square_6 \star \square_6^\op$& 1.2e2& 1.7  &  1.2e2&    ---&  ---&       $\Delta_{19}$& 2.2e1& 1.4  &  1.0e1&   ---&   ---\\
$\square_7 \star \square_7^\op$& 4.8e3& 3.1e1&  4.7e3&  1.1  &  ---&       $\Delta_{20}$& 5.0e1& 2.8  &  2.3e1&   ---&   ---\\
$\square_8 \star \square_8^\op$&   MOF& 4.8e2&    MOF&  1.8e1&1.1e1&       $\Delta_{21}$& 1.1e2& 5.4  &  5.2e1&   ---&   ---\\
          $\A_4 \star \A_4^\op$& 2.9  &   ---&    ---&    ---&  ---&       $\Delta_{22}$& 2.5e2& 1.1e1&  1.1e2&   ---&   ---\\
          $\A_5 \star \A_5^\op$& 5.4e2& 1.6  &  6.1e1&    ---&  ---&       $\Delta_{23}$& 5.5e2& 2.1e1&  2.5e2&   ---&   ---\\
          $\A_6 \star \A_6^\op$& 1.9e5& 6.1e1&  5.3e3&  1.5e1&1.6e1&       $\Delta_{24}$&   MOF& 4.4e1&  5.5e2& 1.5  &   ---\\
                        $\A_7 $& 1.0  &   ---&    ---&    ---&  ---&       $\Delta_{25}$&   MOF& 9.1e1&       & 3.1  &   ---\\
                        $\A_8 $& 2.1e1&   ---&  2.4  &    ---&  ---&       $\Delta_{26}$&   MOF& 1.9e2&       & 6.3  &   ---\\
                        $\A_9 $& 5.9e2& 1.6  &  3.9e1&    ---&  ---&       $\Delta_{27}$&   MOF&   MOF&       & 1.3e1& 2.0  \\
                      $\A_{10}$& 2.8e4& 1.6e1&  8.3e2&  9.6  &  ---&        $\C_{20,20}$& 2.1e1& 1.6  &       &   ---&   ---\\
                      $\A_{11}$&   MOF& 2.4e2&    MOF&  2.0e2&4.7  &        $\C_{20,21}$& 4.8e1& 3.2  &  3.0e2&   ---&   ---\\
                         $\B_5$& 2.3e2& 8.8e0&  9.9e1&    ---&  ---&        $\C_{20,22}$& 2.6e2& 6.3  &    MOF&   ---&   ---\\
                         $\B_6$&   MOF&   MOF&    MOF&  2.7e3&3.1e2&        $\C_{20,23}$&   MOF& 1.2e1&    MOF&   ---&   ---\\
                 $\square_{10}$& 2.8  &   ---&    ---&    ---&  ---&        $\C_{20,24}$&   MOF& 2.6e1&    MOF& 1.6  &   ---\\
                 $\square_{11}$& 1.9e1&   ---&  1.9  &    ---&  ---&        $\C_{20,25}$&      & 4.4e1&       & 5.8  &   ---\\
                 $\square_{12}$& 1.2e2&   ---&  8.2  &    ---&  ---&        $\C_{20,26}$&      & 1.1e2&       & 2.4e1& 2.9  \\
                 $\square_{13}$& 8.9e2& 2.0  &  3.7e1&    ---&  ---&        $\C_{20,27}$&      & 2.7e2&       & 1.1e2& 9.0  \\
                 $\square_{14}$& 9.6e3& 9.0  &  1.9e2&  3.0  &  ---&        $\C_{20,28}$&      & 8.1e2&       & 4.3e2& 2.8e1\\
                 $\square_{15}$&   MOF& 3.2e1&    MOF&  1.8e1&  ---&        $\C_{20,29}$&      &   MOF&       & 1.8e3& 9.8e1\\
                 $\square_{16}$&   MOF& 1.5e2&    MOF&  1.1e2&3.3  &           $L(\A_3)$& 6.3e2& 1.7e1&  2.1e2&   ---&   ---\\
                 $\square_{17}$&   MOF& 9.4e2&    MOF&  6.9e2&2.0e1&      $L(\C_{4,8 })$& 1.1  &   ---&    ---&   ---&   ---\\
                 $\square_{18}$&   MOF&   MOF&    MOF&  4.4e3&1.3e2&      $L(\C_{4,9 })$& 5.3  &   ---&  1.4  &   ---&   ---\\
                   $\C_{10,20}$& 3.3e1&   ---&  2.2  &    ---&  ---&      $L(\C_{4,10})$& 2.3e1&   ---&  5.5  &   ---&   ---\\
                   $\C_{10,21}$& 6.5e1&   ---&  3.6  &    ---&  ---&      $L(\C_{4,11})$& 1.0e2& 2.0  &  2.1e1&   ---&   ---\\
                   $\C_{10,22}$& 1.3e2&   ---&  6.2  &    ---&  ---&      $L(\C_{4,12})$& 4.1e2& 6.9  &  7.3e1&   ---&   ---\\
                   $\C_{10,23}$& 2.6e2&   ---&  1.0e1&    ---&  ---&      $L(\C_{4,13})$& 1.7e3& 2.1e1&  2.5e2& 1.4  &   ---\\
                   $\C_{10,24}$& 5.9e2& 1.2  &  1.8e1&    ---&  ---&      $L(\C_{4,14})$&   MOF& 7.0e1&  8.0e2& 4.6  &   ---\\
                   $\C_{10,25}$& 1.3e3& 1.7  &  3.0e1&    ---&  ---&      $L(\C_{5,8 })$& 1.5  &   ---&    ---&   ---&   ---\\
                   $\C_{10,26}$& 2.6e3& 2.5  &  5.1e1&  1.6  &  ---&      $L(\C_{5,9 })$& 8.7  &   ---&  1.9  &   ---&   ---\\
                   $\C_{10,27}$& 5.0e3& 3.5  &  8.8e1&  2.7  &  ---&      $L(\C_{5,10})$& 5.0e1& 1.2  &  8.5  &   ---&   ---\\
                   $\C_{10,28}$& 9.1e3& 4.9  &  1.4e2&  4.5  &  ---&      $L(\C_{5,11})$& 2.6e2& 4.7  &  3.6e1&   ---&   ---\\
                   $\C_{10,29}$& 1.6e4& 7.0  &  2.3e2&  7.3  &  ---&      $L(\C_{5,12})$& 1.3e3& 1.6e1&  1.5e2&   ---&   ---\\
                   $\C_{10,30}$& 2.6e4& 9.9  &  3.6e2&  1.2e1&  ---&      $L(\C_{5,13})$&   MOF& 5.0e1&  5.5e2& 3.5  &   ---\\
                   $\C_{10,31}$& 4.2e4& 1.4e1&  5.7e2&  1.9e1&  ---&      $L(\C_{5,14})$&   MOF& 1.4e2&    MOF& 1.4e1& 2.0  \\
                   $\C_{10,32}$& 6.6e4& 2.0e1&    MOF&  2.9e1&  ---&  $L(\C_{4,6 }^\op)$& 2.2  &   ---&    ---&   ---&   ---\\
                   $\C_{10,33}$& 1.0e5& 2.9e1&       &  4.4e1&1.4  &  $L(\C_{4,7 }^\op)$&   MOF& 5.3e1&  5.1e2& 2.1  &   ---\\
                   $\C_{10,34}$& 1.5e5& 4.0e1&       &  6.6e1&2.2  &  $L(\C_{4,8 }^\op)$&   MOF&   MOF&    MOF& 1.3e3& 1.6e2\\
                   $\C_{10,35}$& 2.2e5& 5.7e1&       &  9.8e1&3.1  &  $L(\C_{5,7 }^\op)$& 1.6e2& 8.4  &  6.7e1&   ---&   ---\\
                   $\C_{10,36}$&      & 8.0e1&       &       &4.6  &  $L(\C_{5,8 }^\op)$&   MOF&   MOF&    MOF& 3.6e3& 4.3e2\\
                   $\C_{10,37}$&      & 1.1e2&       &       &6.8  &  $L(\square_5^\op)$& 1.2e1&   ---&  4.7  &   ---&   ---\\
                   $\C_{10,38}$&      & 1.5e2&       &       &1.0e1&  $L(\square_6^\op)$& 2.4e2& 9.7  &  9.4e1&   ---&   ---\\
                   $\C_{10,39}$&      & 2.1e2&       &       &1.6e1&  $L(\square_7^\op)$&   MOF& 1.2e2&  1.6e3& 4.6  &   ---\\
                   $\C_{10,30}$&      & 2.9e2&       &       &2.5e1&  $L(\square_8^\op)$&   MOF&   MOF&    MOF& 6.5e1& 1.0e1\\
                   $\C_{10,41}$&      & 3.8e2&       &       &4.0e1& $L(\square_4)$     &   MOF& 2.6e2&  3.5e3& 9.9  & 1.4  \\
                   $\C_{10,42}$&      &   MOF&       &       &6.7e1&           $\K_{12}$&      & 1.5  &       &      &   ---\\
                          $\Hi$&   MOF&      &    MOF&  4.5e2&  ---&           $\K_{13}$&      & 1.6e1&       &      &   ---\\
                         $\P_6$& 7.8  &   ---&  2.6  &    ---&  ---&           $\K_{14}$&      & 1.4e2&       &      & 1.5  \\
                         $\P_7$& 2.2e3& 1.7e1&  4.3e2&  6.0  &  ---&           $\K_{15}$&      & 2.6e3&       &      & 2.1e1\\
                  $\Delta_{16}$& 2.1  &   ---&    ---&    ---&  ---&           $\K_{16}$&      &   MOF&       &      & 2.4e2\\
                  $\Delta_{17}$& 4.6  &   ---&  2.0  &    ---&  ---\\

\end{tabular}
}
\end{center}

\bibliography{references}

\providecommand{\noopsort}[1]{}
\providecommand{\bysame}{\leavevmode\hbox to3em{\hrulefill}\thinspace}
\providecommand{\MR}{\relax\ifhmode\unskip\space\fi MR }
\providecommand{\MRhref}[2]{%
  \href{http://www.ams.org/mathscinet-getitem?mr=#1}{#2}
}
\providecommand{\href}[2]{#2}
\begin{thebibliography}{1}

\bibitem{MR1935777}
Margaret~M. Bayer and G\'{a}bor Hetyei, \emph{Generalizations of {E}ulerian
  partially ordered sets, flag numbers, and the {M}\"{o}bius function},
  Discrete Math. \textbf{256} (2002), no.~3, 577--593.

\bibitem{Normaliz}
W.~Bruns, B.~Ichim, T.~R\"omer, R.~Sieg, and C.~S\"oger, \emph{Normaliz.
  algorithms for rational cones and affine monoids}, available at
  \url{https://www.normaliz.uni-osnabrueck.de}, accessed: \today.

\bibitem{Bruns}
Winfried Bruns, Pedro Garcia-Sanchez, Christopher O'Neill, and Dane Wilburne,
  \emph{Wilf's conjecture in fixed multiplicity}, 2019, preprint,
  \href{https://arxiv.org/abs/1903.04342}{arXiv:1903.04342}.

\bibitem{eliahou}
Shalom Eliahou, \emph{A graph-theoretic approach to {W}ilf’s conjecture},
  2019, preprint, \href{https://arxiv.org/abs/1909.03699}{arXiv:1909.03699}.

\bibitem{polymake:2000}
Ewgenij Gawrilow and Michael Joswig, \emph{{\tt polymake}: a framework for
  analyzing convex polytopes}, Polytopes---combinatorics and computation
  ({O}berwolfach, 1997), DMV Sem., vol.~29, Birkh\"auser, Basel, 2000,
  pp.~43--73. \MR{1785292}

\bibitem{Hampe2019}
Simon Hampe, Michael Joswig, and Benjamin Schröter, \emph{Algorithms for tight
  spans and tropical linear spaces}, Journal of Symbolic Computation
  \textbf{91} (2019), 116--128.

\bibitem{Kaibel2002}
Volker Kaibel and Marc~E. Pfetsch, \emph{Computing the face lattice of a
  polytope from its vertex-facet incidences}, Computational Geometry
  \textbf{23} (2002), 281--290.

\bibitem{Oxley1992}
James~G. Oxley, \emph{Matroid theory}, Oxford University Press, Inc., New York,
  NY, USA, 1992.

\bibitem{ZieglerLecturesPolytopes}
G\"unter~M. {Ziegler}, \emph{{Lectures on polytopes.}}, 7 ed., vol. 152,
  Berlin: Springer-Verlag, 2007 (English).

\end{thebibliography}
\bibliographystyle{amsplain}

\end{document}